\newtheorem{theorem}{Theorem}
\newtheorem{corollary}[theorem]{Corollary}
\newtheorem{lemma}[theorem]{Lemma}
\newtheorem{question}[theorem]{Question}
\newtheorem{proposition}[theorem]{Proposition}
\begin{document}
\title{A note on set-theoretic solutions of the Yang-Baxter equation }
\author{ Agata Smoktunowicz}

\subjclass[2010]{Primary 16T25, 16N80, 16P90, 16N40, 16N20, 20F16, 20E22, 20F28, 81R50, 20B25, 20F38, 20B35, 20F16, 20F29. } 
\keywords{braces, braided group, the Young-Baxter Equation}

\date{\today}

\begin{abstract}
 This paper shows that every finite non-degenerate involutive  set theoretic  solution $(X,r)$ of the Yang-Baxter equation whose permutation group $\mathcal {G}(X,r)$ has cardinality which is a cube-free number  is a multipermutation solution. 
 Some properties of finite braces are also investigated (Theorems \ref{pierwsza}, \ref{dodatkowy}  and  \ref{nil}). 
  It is also shown that if $A$ is a left brace whose cardinality  is an odd number and  $(-a)\cdot b=-(a\cdot b)$ for all $a, b\in A$, then $A$ is a two-sided brace and hence a Jacobson radical ring. It is also observed that the semidirect product and the wreath product of braces of a finite multipermutation level is a brace of a finite multipermutation level.
\end{abstract}
\maketitle

\section{introduction}  {\it Circa} 2005, Wolfgang Rump introduced the notion of braces, a generalisation of Jacobson radical rings, as a tool for investigating solutions of the Yang-Baxter equation. We follow his paper \cite{mob} pp 128  for the definition of a left brace: 
  Let $A$ be an abelian group together with a left distributive multiplication, that is
  $a\cdot (b+c)=a\cdot b+a\cdot c$ 
 for all $a,b, c\in A$. We call $(A, +, \cdot)$ a {\em left brace} if the {\em circle operation} $a\circ b=a\cdot b+a+b$ 
 makes $A$ into a group.  
 As mentioned in \cite{mob}, pp 129, for a left brace $A$  the associativity of $A^{\circ }$ is easily seen to be equivalent to the equation
$(a\cdot b+a+b)\cdot c=a\cdot (b\cdot c)+a\cdot c+b\cdot c$.   The group $A^{\circ }$ will be called the {\em adjoint group} of a  left  brace $A$. 
 In 2012, Ced{\' o}, Jespers and Okni{\' n}ski \cite{cjo} expressed the definition of a left brace in terms of  operation $\circ $. In  their paper, the adjoint group $A^{\circ }$ is called the multiplicative group of the  left  brace $A$; their definition is equivalent to the above definition by Rump.  
 We recall the definition from \cite{cjo}: 
 a left brace is an abelian group $(A, +)$ with a multiplication $\circ $ such that $(A, \circ)$ is a group and 
\[a\circ (b+c)+a=a\circ b+a\circ c\]
 holds for all $a, b, c\in A$. 
 Another interesting structure related to the Yang-Baxter equation, the  braided group,  was introduced in 2000,  by Lu, Yan, Zhu \cite{lzy}. 
 In \cite{Tatyana},  Gateva-Ivanova  showed that left braces are in one-to-one correspondence with braided groups with an involutive braiding operator.
  
Recall that a set-theoretic solution of the Yang-Baxter equation is a pair $(X,r)$ where $X$ is a set and $ r(x,y)=(\sigma _{x}(y), \tau _{y}(x)),$
  for $x,y\in X$, is a bijective map such that 
\[(r\times id_{X})(id_{X}\times r)(r\times id_{X})=(id_{X}\times r)(r\times id_{X})(id_{X}\times r).\]
 A solution $(X,r)$ is non-degenerate if the maps $\sigma _{x}$ and $\tau _{x}$ are bijective for each $x\in X$, and $(X,r)$ is involutive if $r^{2}=id_{X\times X}$. 

{\bf Convention.} By a solution of the Yang-Baxter equation we will mean a non-degenerate, involutive set-theoretic solution of the Yang-Baxter equation.   

 Let $R$ be a left brace; then  the solution $(R,r)$ of the Yang-Baxter equation 
 associated to brace $R$ is defined in the following way:  for $x, y\in R$ define   $r(x,y)=(u,v)$, where  $u=x\cdot y+y,   v=z\cdot x+x$ and where $z$ is the inverse of $u=x\cdot y+y$ in the adjoint group $R^{\circ }$ of $R$, for $x, y\in R$. This solution is called the solution {\em associated with the left brace $R$} and  will be denoted as $(R,r)$.

 The notions of retract of a solution and  multipermutation solution were introduced by  Etingof, Schedler and Soloviev in \cite{etingof}. A multipermutation solution is a generalization of Lybashenko's permutation solution.
  A solution $(X,r)$ is called a multipermutation solution of level $m$ 
if $m$ is the smallest nonnegative integer that, after applying the operation of retraction $m$ times,  the obtained solution has cardinality $1$. If such $m$ exists, the solution is also called a {\em multipermutation solution}, that is a solution which has a finite multipermutation level (for a detailed definition, see \cite{Tatyana}, \cite{cjo}).  Some interesting related results can be found in   \cite{bcjo, etingof, retractable, cjo, tatyana6, v}; for example, it is known a finite solution $(X,r)$ is a multipermutation solution, provided that the permutation group  $\mathcal {G}(X,r)$ is abelian (Theorem $4.3$ \cite{cjo},  for the case  when $(X,r)$ is infinite see Theorem $7.1$ \cite{tatyana6}).

Let $A$ be either a left brace or a right brace. We define  $A^{(n)}=A^{(n-1)}\cdot A$ and $A^{n}=A\cdot A^{n}$, where $A^{(1)}=1$, and say that a left brace $A$ has  {\em a finite multipermutation level} if   $A^{(n)}=0$ for some $n$ or equivalently the solution of the Yang-Baxter equation associated to $A$  is a multipermutation solution (for some related results see \cite{GIS, rump, smok7}).

{\bf Remark} In \cite{rump}, Rump  introduced radical chains $A^{n}$ and $A^{(n)}$ for a right brace $A$. 
 Rump showed that, if $A$ is a right brace, then $A^{n}$ is a two-sided ideal of $A$, and $A^{(n)}$ doesn't need to be a two-sided ideal of $A$. 
 Notice that if $A$ is a left brace then $A^{(n)}$ is a two-sided ideal of $A$, and $A^{n}$ doesn't need to be a two-sided ideal of $A$. 

A semidirect product of braces and wreath product of braces  is an interesting construction. The semidirect product of braces was introduced by Rump in \cite{r36}. The  wreath product of braces  was investigated in 2008 in Corollaries $3.5$ and $3.6$  \cite{cjr1}, \cite{cjr} and in Corollary $6.1$ \cite{cjo}.  The wreath product of solutions  was  studied in 2009 in  \cite{ta1} (see Theorem $8.7$). 
  We have the following observation, which is related to Theorem $8.7$ from \cite{ta1}. 

\begin{proposition}\label{100} Let $A$ and $B$ be left braces of a finite multipermutation level. Then 
the semidirect product $A\rtimes B$ and  the  wreath product $A\wr B$  of braces $A$ and $B$ is a brace of a finite multipermutation level. 
\end{proposition}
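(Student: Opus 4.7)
The plan is to handle the semidirect product first and deduce the wreath product case from it. Realise $C=A\rtimes B$ as the set $A\times B$ with componentwise addition and circle product $(a_1,b_1)\circ(a_2,b_2)=(a_1\circ\alpha_{b_1}(a_2),\,b_1\circ b_2)$, where $\alpha\colon(B,\circ)\to\mathrm{Aut}(A)$ is the prescribed action by brace automorphisms. Write $n_A$ and $n_B$ for the multipermutation levels of $A$ and $B$.

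First I would check that $I:=A\times\{0\}$ is an ideal of $C$ and that projection onto the second coordinate is a brace epimorphism $\pi\colon C\twoheadrightarrow B$ with kernel $I$. This identifies $(C/I)^{(n)}$ with $B^{(n)}$, so already $C^{(n_B)}\subseteq I$. The key local identity, valid because $\alpha_0=\mathrm{id}_A$, is
\[(a,0)\circ(a',b')=(a\circ a',\,b'),\qquad(a,0)\cdot_{C}(a',b')=(a\cdot a',\,0)\]
for all $a,a'\in A$ and $b'\in B$: left multiplication in $C$ by an element of $I$ forgets the $B$-coordinate of its right argument and collapses to multiplication inside $A$. A short induction on $k$ then yields $C^{(n_B+k-1)}\subseteq A^{(k)}\times\{0\}$, and setting $k=n_A$ gives $C^{(n_A+n_B-1)}=0$.

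For the wreath product I would apply the semidirect case to the standard presentation $A\wr B=A^{(B)}\rtimes B$, in which $A^{(B)}$ is the direct sum of copies of $A$ indexed by $B$ and $B$ permutes the summands through its regular $\circ$-action. Since $(\bigoplus_i A_i)^{(n)}=\bigoplus_i A_i^{(n)}$, the brace $A^{(B)}$ has multipermutation level $n_A$, and the conclusion is immediate.

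The only step that genuinely needs care is the local identity $(a,0)\cdot_{C}(a',b')=(a\cdot a',\,0)$; it is the engine of the argument, because it is what lets the $A$-coordinate of the series $C^{(n)}$ advance under multiplication by arbitrary elements of $C$ rather than just by elements of $A$ itself. Everything else — the behaviour of the series under quotients and under direct sums of braces — is formal.
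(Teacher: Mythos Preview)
Your proposal is correct and follows essentially the same route as the paper: both arguments first use $B^{(n_B)}=0$ to force the second coordinate of $C^{(n_B)}$ to vanish, then invoke the key identity $(a,0)\cdot_C(a',b')=(a\cdot a',0)$ (coming from $\alpha_0=\mathrm{id}$) to push the first coordinate down through $A^{(k)}$; the wreath product is likewise reduced in both cases to observing that the base $A^{(B)}$ has componentwise multiplication and hence the same multipermutation level as $A$. Your formulation via the quotient $C/I\cong B$ and the inclusion $C^{(n_B+k-1)}\subseteq A^{(k)}\times\{0\}$ is a slightly cleaner packaging of the same computation the paper does elementwise, and even yields the marginally sharper bound $n_A+n_B-1$ in place of $n_A+n_B$.
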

  
 Braided groups and braces have interesting connections with group theory.
 In \cite{cjr}, it was shown that every finite nilpotent group can be embedded into an adjoint group of a finite brace whose adjoint group is nilpotent.
 In  Corollary $6.1$ \cite{cjo}, it was shown that every finite solvable group is a subgroup of an adjoint group of a finite brace. We notice that, by using the same proof as in Corollary $6.1$ in \cite{cjo} and  Proposition \ref{100},  it is possible to show a slightly stronger result, namely:

{\bf Remark} (Related to Corollary $6.1$, \cite{cjo})  Let $G$ be a finite solvable group. There is a finite left brace $A$ of a finite multpermutation level, such that $G$ is  a  subgroup of the adjoint group $A^{\circ }$ of $A$.

 We don't know the answer to the following questions.
\begin{question}   
 Let $G$ be a finite group which is the adjoint group of some left brace $A$. Does it follow that $G$ is the adjoint group 
 of some left brace of a finite multipermutation level?
\end{question} 
 It was shown by Rump that two-sided braces are exactly Jacobson radical rings \cite{mob} pp. 129. Therefore, every finite two-sided brace is a nilpotent ring and it has a finite mulitpermutation level ( see \cite{mob} pp. 135,  \cite{rump} pp. 154, and  for a detailed proof see  \cite{cjo1}).
 By Corollary of \cite{K}, for every prime $p$, a group of order $p^{4}$  is the adjoint group of a two-sided
brace if and only if it is abelian or has class $2$. By Theorem 2.1 of \cite{cjo3},
there exists a group of order $p^{4}$ of class $3$  which is the adjoint group of a left
 brace, but is not the adjoint group of a two-sided brace, for example 
\[\langle x,y |\text{ }  x^{3}=y^{3}=[x,y]^{3}=[x,[x,y]]^{3}=[y,[x,y]]=1, \text{and }  [x,[x,y]] \text{ is central }\rangle\]
has order $3^{4}$ and it has class $3$ and it is the adjoint group of a left brace.

 Nil rings are examples of left braces and two-sided braces.  The following question are related to Eggert's conjecture \cite{4}.
\begin{question} (Amberg, Kazarin, Sysak \cite{4,6}). If $R$ is a nil ring whose adjoint
group $R^{\circ }$ is finitely generated, is $R$ nilpotent?
\end{question}
\begin{question}  Let $F$ be a field of characteristic not two.  An associative $F$-algebra
$R$ gives rise to the commutator Lie algebra $R^{-}=(R, [a,b]=ab-ba)$.
 If $R$  is a nil algebra such that $R^{-}$ is finitely generated Lie algebra, is $R$
nilpotent?
\end{question}
 In \cite{jain} Alahmedi, Alsulami, Jain and Zelmanov give 
sufficient conditions for the Lie algebra $R^{-}$ be finitely generated.  
 In \cite{agv} Angiono, Galindo and Vendramin provided Lie-theoretical analogs of braces.
Some interesting results on nil and nilpotent subsets   in  Lie rings can be found in  \cite{gb, mz, p1,  psz, sz}.

We will use the following notation.
\[a^{\circ( n)}=a\circ a\circ \cdots \circ a\] where $a$ appears $n$ times, so $a^{\circ (n)}$ is the $n$-th power of $a$ in the adjoint group $A^{\circ }$ of $A$.
 The main result of this paper is the following. 
\begin{theorem}\label{pierwsza}
 Let $(A, +, \cdot)$ be a left brace and let $A_{p}$, $A_{q}$ be the Sylow's subgroups of the additive group of $A$ of cardinalities respectively $p^{n}$ and $q^{m}$ for some prime numbers $q$ and $p$ and some natural numbers $m,n$. 
 Then the following holds:
\begin{itemize}
\item [1.]
If $p$ doesn't divide $q^{t}-1$ for any $1\leq t\leq m$, then \[A_{p}\cdot A_{q}=0.\]
\item[2.]  Let $a\in A_{p}, b\in A_{q}$ and $k$ be the maximal number such that $p^{k}$ divides $q^{t}-1$ for some  
$1\leq t\leq m$. Then $a^{\circ (p^{k})}\cdot b=0$. 
\end{itemize}
\end{theorem}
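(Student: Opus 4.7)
The plan is to reduce both parts of the theorem to a single claim about the action of $A_p$ on $A_q$ via the map $\lambda_a(b)=a\cdot b+b$, and then to bound the exponent of the relevant Sylow $p$-subgroup of $\mathrm{Aut}(A_q,+)$ by a classical eigenvalue argument in $\mathrm{GL}_r(\mathbb{F}_q)$. I read part (2) as the claim that $a^{\circ(p^{k})}\cdot b=0$ for $a\in A_p$ and $b\in A_q$, the natural reading given part (1).

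First I would verify that $A_p$ is a sub-brace of $A$, in particular a subgroup of $A^{\circ}$. Since $A_p$ is characteristic in $(A,+)$ and each $\lambda_c$ is an additive automorphism of $A$, one has $\lambda_c(A_p)=A_p$, so $A_p$ is a left ideal of $A$. For $a,b\in A_p$ this gives $a\cdot b=\lambda_a(b)-b\in A_p$, so $A_p$ is closed under the brace multiplication, and hence under $\circ$. Because $|A_p|=p^{n}$ is exactly the $p$-part of $|A|=|A^{\circ}|$, $A_p$ is a Sylow $p$-subgroup of the adjoint group; in particular every $a\in A_p$ has $\circ$-order a power of $p$. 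The same reasoning makes $A_q$ a left ideal, so $\lambda_a$ restricts to an additive automorphism of $A_q$ for every $a\in A$.

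Second, I would bound the exponent of the Sylow $p$-subgroup of $\mathrm{Aut}(A_q,+)$. The kernel of the reduction $\mathrm{Aut}(A_q)\to\mathrm{Aut}(A_q/qA_q)$ is a $q$-group, because any automorphism $\phi$ trivial on the Frattini quotient has $\phi-\mathrm{id}$ nilpotent on the finite $q$-torsion group $A_q$, forcing $\phi$ itself to have $q$-power order. Hence any element of $p$-power order in $\mathrm{Aut}(A_q)$ embeds into $\mathrm{GL}_r(\mathbb{F}_q)$, where $r$ is the number of invariant factors of $A_q$ and $r\leq m$. Now an element $\varphi\in\mathrm{GL}_r(\mathbb{F}_q)$ of order $p^{k+1}$ would be semisimple (as $p\neq q$) with an eigenvalue $\zeta\in\overline{\mathbb{F}_q}$ of order exactly $p^{k+1}$. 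The minimal polynomial of $\zeta$ over $\mathbb{F}_q$ then has degree equal to the multiplicative order of $q$ modulo $p^{k+1}$, which exceeds $m\geq r$ by the maximality of $k$, contradicting the fact that this polynomial divides the characteristic polynomial of $\varphi$ of degree $r$.

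Combining the two steps, for every $a\in A_p$ the restriction $\lambda_a|_{A_q}\in\mathrm{Aut}(A_q)$ has order dividing $p^{k}$, so $\lambda_{a^{\circ(p^{k})}}|_{A_q}=\lambda_a^{p^{k}}|_{A_q}=\mathrm{id}$, which is exactly $a^{\circ(p^{k})}\cdot b=0$ for every $b\in A_q$; this gives part (2), and part (1) is the special case $k=0$. The step I expect to be most delicate is the second one, combining the $q$-power order of the reduction kernel, the bound $r\leq m$ on the invariant factors, and the field-theoretic translation of the arithmetic hypothesis on $q^{t}-1$ into a restriction on the orders of semisimple elements of $\mathrm{GL}_r(\mathbb{F}_q)$.
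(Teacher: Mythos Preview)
Your proof is correct and takes a genuinely different route from the paper's. The paper works element by element: for $a\in A_p$ and $b\in A_q$ with $q\cdot b=0$, it studies the sequence $e_0=b$, $e_{i+1}=a\cdot e_i$, encodes the relation $a^{\circ(p^j)}\cdot b=0$ as the vanishing of $(x+1)^{p^j}-1$ on the $e_i$, uses pigeonhole in $A_q$ (which has only $q^m$ elements) to produce a second polynomial relation of degree at most $m$, and then computes the gcd of these two polynomials in $F_q[x]$ to obtain $(x+1)^{p^k}-1$ (Theorem~\ref{25}); a separate reduction (Lemma~\ref{aab}) is needed to pass from arbitrary $b\in A_q$ to the case $q\cdot b=0$.

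Your argument packages all of this structurally: the sequence $e_i$ is the orbit of $b$ under $\lambda_a-\mathrm{id}$, the two polynomial relations become respectively the $p$-power order of $\lambda_a$ and the characteristic polynomial of its image in $\mathrm{GL}_r(\mathbb{F}_q)$, and the gcd computation turns into the eigenvalue/degree bound. The reduction from general $b$ to $q\cdot b=0$ is absorbed into the fact that the kernel of $\mathrm{Aut}(A_q,+)\to\mathrm{GL}_r(\mathbb{F}_q)$ is a $q$-group. Your approach is shorter and more conceptual, and it makes transparent that the theorem is really a statement about the exponent of a Sylow $p$-subgroup of $\mathrm{Aut}(A_q,+)$; the paper's approach is more self-contained (it invokes no structural facts about automorphism groups of abelian $q$-groups) and develops polynomial machinery (Lemmas~\ref{21}--\ref{polynomials}) that is reused elsewhere in the paper. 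Your reading of part~(2) as a statement about $a\in A_p$ and $b\in A_q$ matches how the paper proves and applies it.
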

 
 Notice that Theorem \ref{pierwsza} implies that if $q<p$ and $p$ doesn't divide $q^{t}-1$ for any $0<t\leq m$, then every left brace of order $p^{n}q^{m}$ is not a simple left brace, as $A_{p}$ is an ideal in $A$. Moreover, if $A_{p}$ has a nonzero socle, then $A$ has a nonzero socle.
 Note that in \cite{B}  Bachiller  gave the first examples of
non-trivial finite simple left braces.  
 We would like to pose a related open question. 
\begin{question}\label{5} 
 Let  $p,q$ be prime numbers and $\alpha, \beta $ be positive integers. Assume that  $p$ divides $q^{\beta }-1$ and 
  $q$ divides $p^{\alpha }-1$. Is there a simple brace of cardinality $p^{\alpha }q^{\beta }$?
\end{question} 
 Our next result follows from Theorem \ref{pierwsza}
\begin{corollary}\label{dodatkowy}
Let $A$ be a left brace  of cardinality $p_{1}^{\alpha (1)}\ldots p_{n}^{\alpha (n)}$ for some $n$, some prime numbers $p_{1}<p_{2}< \ldots < p_{n}$ and some positive integers $\alpha (1), \ldots , \alpha (n)$. 
  Let $A_{i}$ denote the Sylow's subgroup of the additive group of $A$ of cardinality $p_{i}^{\alpha (i)}$.
 Suppose that for some $m\leq n$ the brace $A_{m}$ has a nonzero socle and  $p_{m}$ doesn't divide  
  \[p_{j}^{i}-1\]   for all $j\leq n$ and each  $i\leq \alpha (j)$.  Then the socle of $A$ is non-zero.
\end{corollary}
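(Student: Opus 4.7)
The plan is to produce a nonzero socle element of $A$ directly from a nonzero socle element of $A_m$, using Theorem~\ref{pierwsza}(1) to annihilate the interactions between $A_m$ and the remaining Sylow components. The first step is to invoke the hypothesis on $p_m$ componentwise: for each $j \neq m$, the assumption that $p_m$ does not divide $p_j^i - 1$ for any $1 \leq i \leq \alpha(j)$ is exactly the hypothesis of Theorem~\ref{pierwsza}(1) applied with $p = p_m$ and $q = p_j$, so I get $A_m \cdot A_j = 0$ for every $j \neq m$.

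The second step uses the standing assumption that the sub-brace $A_m$ has nonzero socle: fix $a \in A_m$ with $a \neq 0$ and $a \cdot c = 0$ for every $c \in A_m$. Now, since $(A,+)$ decomposes as the internal direct sum $A = A_1 \oplus \cdots \oplus A_n$ of its Sylow subgroups, any $b \in A$ has a unique decomposition $b = b_1 + \cdots + b_n$ with $b_j \in A_j$. Left distributivity then gives
\[
 a \cdot b \;=\; \sum_{j=1}^{n} a \cdot b_j \;=\; a \cdot b_m + \sum_{j \neq m} a \cdot b_j \;=\; 0,
\]
because $a \cdot b_m = 0$ by the choice of $a$, and $a \cdot b_j = 0$ for $j \neq m$ by the first step. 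Hence $a$ is a nonzero element of the socle of $A$.

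The argument is essentially a bookkeeping exercise, and the only thing that requires care is ensuring that the notion of ``socle of $A_m$'' is meaningful, i.e.\ that $A_m$ is closed under the brace multiplication and so is itself a left sub-brace of $A$; this is a standard structural property of finite braces (Sylow subgroups of the additive group are sub-braces) which we take for granted, as it is already implicit in the formulation of Theorem~\ref{pierwsza}. Once this is granted, the combination of Theorem~\ref{pierwsza}(1) with left distributivity concludes the proof without further work.
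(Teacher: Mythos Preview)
Your proof is correct and follows essentially the same route as the paper's own argument: apply Theorem~\ref{pierwsza}(1) to obtain $A_m\cdot A_j=0$ for $j\neq m$, then take a nonzero element $a$ in the socle of $A_m$ and conclude via the additive Sylow decomposition and left distributivity that $a\cdot A=0$. The paper's version is terser (it simply asserts $a\cdot A_i=0$ for all $i$ without writing out the decomposition), but the substance is identical; your added remark that $A_m$ is a sub-brace is exactly what the paper later cites as Lemma~17 of \cite{smok7}.
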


 Recall that the socle of a left brace is defined as 
Soc$(A)=\{a\in A: a\cdot b=0$ for all $b\in A\}$ (see \cite{rump}, and for left braces \cite{cjo}). 
 We have the following corollary of Corollary \ref{dodatkowy}.
\begin{corollary}\label{49} 
 Let  $A$ be a left brace of cardinality $p_{1}^{\alpha _{1}}\cdots p_{n}^{\alpha _{n}}$ for some pairwise distinct prime numbers $p_{1}, p_{2},\ldots ,p_{n}$  and some positive integers $\alpha _{1}\ldots \alpha _{n}$. Assume that for every $i\leq n$, and every $k<i$, 
 $p_{i}$ doesn't divide  \[p_{k}^{t}-1\] for any $t\leq \alpha _{k}$. If all the Sylow's subgroups of the additive group of $A_{m}$ are braces of a finite multipermutation level then $A$ is a brace of a finite multipermutation level. 
\end{corollary}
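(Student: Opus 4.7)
The plan is induction on $n$, the number of distinct primes dividing $|A|$. The base case $n=1$ is immediate since $A=A_1$ is multipermutation by hypothesis. For the inductive step, two structural ingredients are needed. First, Theorem~\ref{pierwsza}(1) combined with the divisibility hypothesis gives $A_i\cdot A_k=0$ whenever $k<i$. Second, $A_n$ is an ideal of $A$: as the unique $p_n$-primary component of $(A,+)$ it is preserved by every additive automorphism, and hence by every map $\lambda_a\colon b\mapsto a\cdot b+b$ coming from the brace structure, so $A\cdot A_n\subseteq A_n$; together with $A_n\cdot A=A_n\cdot(A_1+\cdots+A_n)=A_n\cdot A_n\subseteq A_n$ (by left distributivity of $\cdot$ over $+$ and the vanishing $A_n\cdot A_k=0$ for $k<n$), a routine verification, of the same nature as that used by the author after Theorem~\ref{pierwsza}, shows the brace operations descend to $A/A_n$.

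Now form the quotient brace $A/A_n$. Its additive Sylow subgroups are brace-isomorphic to $A_1,\ldots,A_{n-1}$, which are multipermutation by hypothesis, and the divisibility hypothesis is inherited on the shorter list of primes $p_1,\ldots,p_{n-1}$. By the inductive hypothesis $A/A_n$ is multipermutation, giving $A^{(m)}\subseteq A_n$ for some $m$.

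To finish, I prove $A^{(m+j)}\subseteq A_n^{(j+1)}$ by induction on $j\ge 0$. The base case $j=0$ is what we have just shown. For the step, take $a\in A_n^{(j+1)}\subseteq A_n$ and an arbitrary $b=b_1+\cdots+b_n\in A$ with $b_k\in A_k$; left distributivity gives $a\cdot b=a\cdot b_1+\cdots+a\cdot b_n$, and the terms with $k<n$ vanish by $A_n\cdot A_k=0$, so $a\cdot b=a\cdot b_n\in A_n^{(j+1)}\cdot A_n=A_n^{(j+2)}$. Hence $A^{(m+j+1)}=A^{(m+j)}\cdot A\subseteq A_n^{(j+1)}\cdot A\subseteq A_n^{(j+2)}$. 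Since $A_n$ has finite multipermutation level $L$ by hypothesis, $A_n^{(L)}=0$, and therefore $A^{(m+L-1)}=0$, so $A$ has finite multipermutation level.

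The main obstacle is verifying that $A_n$ is an ideal of $A$, since this is what both allows the construction of the quotient $A/A_n$ for the outer induction and ensures that the Sylow subgroups of $A/A_n$ can be identified as sub-braces brace-isomorphic to $A_1,\ldots,A_{n-1}$; once this is settled the rest of the proof is a routine nested induction using left distributivity and the hypothesis-driven vanishing $A_n\cdot A_k=0$ for $k<n$.
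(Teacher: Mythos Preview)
Your proof is correct but follows a different route from the paper's. The paper proves Corollary~\ref{49} by iterated retraction: it observes that if $A$ satisfies the hypotheses of Corollary~\ref{dodatkowy} (taking $m=n$, so that $p_n$ does not divide $p_j^t-1$ for any $j<n$ and the nonzero-socle requirement on $A_n$ follows from the multipermutation hypothesis), then $A/\mathrm{Soc}(A)$ again satisfies those hypotheses, so one may apply Corollary~\ref{dodatkowy} repeatedly until the brace becomes trivial. Your argument instead proceeds by induction on the number of primes: you show that $A_n$ is an ideal (using $\lambda_a(A_n)=A_n$ together with $A_n\cdot A_k=0$ for $k<n$, exactly as in the paper's remark after Theorem~\ref{pierwsza}), quotient by it, apply the inductive hypothesis to $A/A_n$ to get $A^{(m)}\subseteq A_n$, and then feed the descending chain of $A$ into the descending chain of $A_n$.

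Both arguments rest on the same key input, Theorem~\ref{pierwsza}(1), but organise the descent differently. The paper's approach is shorter because it reuses Corollary~\ref{dodatkowy} as a black box and avoids verifying that $A_n$ is an ideal. Your approach has the advantage of giving an explicit bound on the multipermutation level of $A$ in terms of those of the Sylow sub-braces (namely at most their sum), which the socle iteration does not immediately provide, and it makes transparent why the ``top'' prime $p_n$ plays a special role at each stage.
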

 Recall that if $A$ is a finite left brace then all the Sylow's subgroups of the additive group of $A$ are also left braces (see Lemma $17$ \cite{smok7}).

As an application of Theorem \ref{pierwsza}, we obtain the following corollary.
\begin{corollary}\label{momik}
 If $A$ is a brace whose cardinality is a cube-free number, then the socle of $A$ is nonzero. Moreover,  $A$ is a brace of a finite multipermutation level.
\end{corollary}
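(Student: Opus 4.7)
The plan is to separate the two conclusions and reduce the finite multipermutation level assertion to the nonzero-socle assertion by induction on $|A|$: assuming $\mathrm{Soc}(A) \neq 0$ for every brace of cube-free cardinality, the quotient $A/\mathrm{Soc}(A)$ is a brace of strictly smaller cube-free cardinality, hence has finite multipermutation level by the inductive hypothesis, and consequently so does $A$.

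It then remains to produce a nonzero socle. Writing $|A| = p_1^{\alpha_1}\cdots p_n^{\alpha_n}$ with $p_1 < \cdots < p_n$ and $\alpha_i \in \{1,2\}$, each Sylow sub-brace $A_{p_i}$ (a sub-brace by Lemma~$17$ of \cite{smok7}) has order at most $p_i^2$; its adjoint group is then abelian, so by Theorem~$4.3$ of \cite{cjo} $A_{p_i}$ is multipermutation and in particular $\mathrm{Soc}(A_{p_i}) \neq 0$. The natural attempt is to propagate the nonzero socle from $A_{p_n}$ to $A$ via Corollary~\ref{dodatkowy} with $m = n$. Its divisibility hypothesis ``$p_n \nmid p_j^i - 1$ for $j \leq n$, $i \leq \alpha_j \leq 2$'' reduces to excluding $p_n \mid (p_j-1)(p_j+1)$ for $j < n$; since $p_n$ is prime and $(p_j-1)(p_j+1) < p_n^2$, the only obstruction is $p_j+1 = p_n$ with $p_j$ prime, forcing $p_j = 2$ and $p_n = 3$. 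Thus Corollary~\ref{dodatkowy} applies whenever $p_n \geq 5$, and also when $|A|$ is a prime power or when $|A| = 2 \cdot 3^{\alpha_2}$ (where one checks $3 \nmid 1$ and $3 \nmid 3^i - 1$ by hand).

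The main obstacle consists of the two remaining orders $|A| \in \{12, 36\}$, for which no choice of $m$ satisfies the divisibility hypothesis of Corollary~\ref{dodatkowy}. For these I would argue directly from Theorem~\ref{pierwsza}(2). Each Sylow $A_q$ is characteristic in $(A,+)$, hence invariant under the additive automorphism $b \mapsto a\cdot b + b$ for every $a \in A^\circ$, so one has a group homomorphism $\lambda\colon A^\circ \to \mathrm{Aut}(A_2, +) \times \mathrm{Aut}(A_3, +)$ whose kernel is exactly $\mathrm{Soc}(A)$. Theorem~\ref{pierwsza}(2) forces the image in $\mathrm{Aut}(A_2,+)$ to have exponent dividing $3$ (the maximal $k$ with $3^k \mid 2^t - 1$ for $1 \leq t \leq 2$ is $k = 1$), and the image in $\mathrm{Aut}(A_3,+)$ to have exponent dividing $2^{k'}$ where $k' = 1$ if $|A_3| = 3$ and $k' = 3$ if $|A_3| = 9$. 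Enumerating the possible additive structures of $A_2$ and $A_3$ and using $\mathrm{Aut}(\mathbb{Z}/4) \cong \mathbb{Z}/2$, $\mathrm{Aut}((\mathbb{Z}/2)^2) \cong S_3$, $\mathrm{Aut}(\mathbb{Z}/9) \cong \mathbb{Z}/6$ and $\mathrm{Aut}((\mathbb{Z}/3)^2) \cong \mathrm{GL}_2(\mathbb{F}_3)$, together with the divisibility $|\mathrm{im}\,\lambda| \mid |A^\circ| = |A|$, forces $|\mathrm{im}\,\lambda| < |A|$ in every case, so $\mathrm{Soc}(A) = \ker \lambda \neq 0$. The most delicate sub-case is $|A|=36$ with $A_3 = (\mathbb{Z}/3)^2$, where one must check that any subgroup of $\mathrm{GL}_2(\mathbb{F}_3)$ of exponent dividing $8$ whose order also divides $36$ is a $2$-group of order at most $4$.
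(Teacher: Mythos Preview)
Your overall strategy matches the paper's: reduce the multipermutation assertion to the nonzero-socle assertion by iterated passage to $A/\mathrm{Soc}(A)$; for the socle, show $\mathrm{Soc}(A_{p_n})\neq 0$ using that groups of order $p$ or $p^2$ are abelian, and propagate this to $\mathrm{Soc}(A)$ via Theorem~\ref{pierwsza}/Corollary~\ref{dodatkowy} whenever $p_n\ge 5$; treat the residual orders $12$ and $36$ separately. The paper's Theorem~\ref{187} does exactly this.

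The gap is in your treatment of $|A|\in\{12,36\}$. Theorem~\ref{pierwsza}(2) constrains only elements $a\in A_p$: it yields $a^{\circ(p^k)}\cdot A_q=0$ for $a\in A_p$, i.e.\ an exponent bound on $\lambda_q(A_p^{\circ})$, not on all of $\lambda_q(A^{\circ})$. Your assertion that ``the image in $\mathrm{Aut}(A_2,+)$ has exponent dividing $3$'' would force $\lambda_2(A_2^{\circ})=1$, i.e.\ $A_2$ a trivial brace, which is false for several braces of order $4$; likewise ``the image in $\mathrm{Aut}(A_3,+)$ has exponent dividing $2^{k'}$'' would force $A_3$ trivial. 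With the correct (weaker) constraints, the counting argument fails in the sub-case $A_2\cong(\mathbb{Z}/2)^2$: for $|A|=12$ one has $\mathrm{Aut}(A_2,+)\times\mathrm{Aut}(A_3,+)\cong S_3\times\mathbb{Z}/2$ of order exactly $12$, and Theorem~\ref{pierwsza}(2) adds nothing beyond what the Sylow orders already give, so nothing in your list of constraints excludes $\lambda$ injective. The same obstruction appears for $|A|=36$ with $A_2\cong(\mathbb{Z}/2)^2$; your ``most delicate sub-case'' analysis of $\mathrm{GL}_2(\mathbb{F}_3)$ rests precisely on the incorrect exponent-$8$ claim for the \emph{full} image.

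For comparison, the paper does not attempt a structural argument here: it disposes of $|A|\in\{6,12,18,36\}$ by appealing to Vendramin's computer enumeration of small braces (the Remark preceding Theorem~\ref{187}). So the paper's proof is not self-contained at this point either, but it is honest about invoking the classification rather than a counting bound that does not close.
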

 We can formulate Corollary \ref{momik} in the language of braided groups as follows. 
\begin{corollary}\label{momik2}
 If $(G, \sigma )$ is a symmetric group (in the sense of Takeuchi)  whose cardinality is  a cube-free number, 
 then $(G, \sigma)$ has a finite multipermutation level.
\end{corollary}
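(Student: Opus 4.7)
The plan is to pass through the equivalence between left braces and braided groups with involutive braiding operators (symmetric groups in the sense of Takeuchi), established by Gateva-Ivanova in \cite{Tatyana} and mentioned in the introduction. Under this correspondence, a symmetric group $(G,\sigma)$ gives rise to a left brace $A$ on the same underlying set (so in particular $|A|=|G|$), and conversely every left brace yields such a symmetric group via its associated solution $(A,r)$. Crucially, the retraction procedure on $(G,\sigma)$ matches the retraction of the solution associated to $A$, and hence the multipermutation level of $(G,\sigma)$ coincides with the multipermutation level of the solution associated to $A$.

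Granted this dictionary, the argument is a one-line translation. First I would take the left brace $A$ corresponding to $(G,\sigma)$; by construction $|A|$ is cube-free. Then I would apply Corollary~\ref{momik} to $A$ to conclude that $A$ is a left brace of finite multipermutation level, i.e.\ $A^{(n)}=0$ for some $n$. By definition this is equivalent to the solution of the Yang-Baxter equation associated to $A$ being a multipermutation solution of finite level. Finally, invoking the correspondence once more, $(G,\sigma)$ itself has finite multipermutation level.

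The only place where one has to be careful, and hence the step I would treat as the main potential obstacle, is making sure that the functor of \cite{Tatyana} really does identify the multipermutation level of $(G,\sigma)$ with that of the Yang-Baxter solution associated to the corresponding brace $A$, so that the equivalent characterization $A^{(n)}=0$ can be used. This is essentially part of the standard dictionary between braces, braided groups and involutive set-theoretic solutions, so no new computation is needed beyond citing the relevant results from \cite{Tatyana} and \cite{cjo}.
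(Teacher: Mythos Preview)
Your proposal is correct and follows exactly the same route as the paper: invoke Gateva-Ivanova's one-to-one correspondence between left braces and symmetric groups in the sense of Takeuchi, transfer the cube-free cardinality hypothesis to the associated brace, and apply Corollary~\ref{momik}. The paper's own proof is a two-line version of precisely this argument, with less explicit discussion of the compatibility of multipermutation levels under the correspondence.
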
 
 Our next result is the following.
 \begin{theorem}\label{momik3}
 Every finite solution $(X,r)$ of the Yang-Baxter equation whose permutation group $\mathcal {G}(X,r)$ has cardinality which is a cube-free number  is a multipermutation solution.
\end{theorem}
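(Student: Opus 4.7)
The plan is to reduce the statement to Corollary \ref{momik} by passing to the brace structure on the permutation group. Recall that for any finite non-degenerate involutive solution $(X,r)$, the permutation group $\mathcal{G}(X,r)=\langle \sigma_x : x\in X\rangle$ carries a canonical left brace structure, with $\mathcal{G}(X,r)$ itself playing the role of its adjoint group (Etingof-Schedler-Soloviev \cite{etingof}, reformulated in brace-theoretic language by Cedo-Jespers-Okninski \cite{cjo}). In particular, the cardinality of the brace $\mathcal{G}(X,r)$ is exactly the cube-free number appearing in the hypothesis.

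Applying Corollary \ref{momik} to this brace then yields immediately that $\mathcal{G}(X,r)$ has a finite multipermutation level, i.e.\ $\mathcal{G}(X,r)^{(n)}=0$ for some $n$. The remaining task is to translate this brace-theoretic conclusion back into the statement that the original solution $(X,r)$ is a multipermutation solution. For this I would invoke the known equivalence, for finite non-degenerate involutive solutions, that $(X,r)$ is a multipermutation solution if and only if its permutation brace $\mathcal{G}(X,r)$ has finite multipermutation level (this is, in essence, already contained in the work of Gateva-Ivanova \cite{Tatyana} and of Cedo-Jespers-Okninski \cite{cjo}). A convenient way to verify this equivalence is to identify one step of retraction of $(X,r)$ with the quotient of $\mathcal{G}(X,r)$ by its socle, and then to iterate: $\mathcal{G}(X,r)^{(n)}=0$ forces the retraction tower of $(X,r)$ to stabilize at a one-element set in finitely many steps.

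The main obstacle lies entirely in the last paragraph: one has to pin down carefully the comparison between the retraction tower of $(X,r)$ and the chain $\mathcal{G}(X,r)^{(n)}$, so that finite multipermutation level of the brace really does propagate to the solution. All the arithmetic and brace-theoretic content of the argument is already encoded in Theorem \ref{pierwsza} and hence in Corollary \ref{momik}, so the proof should be a short wrap-up once this bookkeeping is in place.
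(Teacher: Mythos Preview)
Your proposal is correct and follows essentially the same route as the paper: use the left brace structure on $\mathcal{G}(X,r)$, apply Corollary~\ref{momik} to conclude it has finite multipermutation level, and then invoke Gateva-Ivanova's equivalence \cite{Tatyana} between $(X,r)$ being a multipermutation solution and $\mathcal{G}(X,r)$ having finite multipermutation level. The paper simply cites \cite{Tatyana} for the last step rather than sketching the retraction/socle comparison you outline, but the argument is the same.
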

 Recall that the permutation group $\mathcal{G}(X,r)$ of left actions associated with $(X,r)$ was introduced by Gateva-Ivanova in \cite{tatyana2} (see also \cite{Tatyana}). 

 Our last application of  Theorem \ref{pierwsza} is the following. 
\begin{corollary}\label{ostat}
 Let $A$ be a finite left brace and let $A_{1}, \ldots , A_{n}$ be the Sylow's subgroups of the additive group of $A$. If for every $i$, $A_{i}\cdot A_{i}=0$ and the  additive group of $A_{i}$ is cyclic, then $A$ 
 is a left brace of a finite multipermutation level.
\end{corollary}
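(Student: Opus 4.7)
I would prove Corollary~\ref{ostat} by induction on $|A|$, reducing the whole problem to the single claim that $\mathrm{Soc}(A)\neq 0$. Once this is established, $A/\mathrm{Soc}(A)$ is a left brace of strictly smaller cardinality whose $p_i$-Sylow component $A_i/(A_i\cap\mathrm{Soc}(A))$ is cyclic (as a quotient of a cyclic group) and still satisfies $(A/\mathrm{Soc}(A))_i\cdot (A/\mathrm{Soc}(A))_i=0$, inherited from $A_i\cdot A_i=0$. Hence the hypotheses persist, the inductive hypothesis furnishes a finite multipermutation level for the quotient, and this lifts to $A$ through the socle series.

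To produce socle elements, I would first note that $A_i\cdot A_i=0$ forces $a\circ b=a+b$ on $A_i$, so $A_i^\circ=A_i^+$ and $a^{\circ(n)}=na$ for every $a\in A_i$ and every $n\geq 1$; in particular $A_i$ is a subgroup of $A^\circ$. For each $j\neq i$, Theorem~\ref{pierwsza}(2) applied to the pair $(A_i,A_j)$ supplies an integer $k_{ij}$ such that $a^{\circ(p_i^{k_{ij}})}\cdot b=0$ for all $b\in A_j$. Writing $\lambda_a(b)=a\cdot b+b$, the identity $a^{\circ N}\cdot b=\lambda_a^{N}(b)-b$ shows that if $\lambda_a^{N}$ fixes $b$, then so does $\lambda_a^{cN}$ for every $c\geq 1$. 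Setting $K_i=\max_{j\neq i}k_{ij}$ therefore yields $a^{\circ(p_i^{K_i})}\cdot A_j=0$ for all $j\neq i$, and the case $j=i$ is automatic because $a^{\circ(p_i^{K_i})}\in A_i$ and $A_i\cdot A_i=0$. Translating through $a^{\circ(n)}=na$, this gives $p_i^{K_i}a\in\mathrm{Soc}(A)$ for every $a\in A_i$.

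The main obstacle is to guarantee $K_i<\alpha(i)$ for at least one index $i$, so that $p_i^{K_i}a$ can actually be nonzero for a generator $a$ of the cyclic group $A_i$. My plan is to pick $i_0$ so that $p_{i_0}^{\alpha(i_0)}$ is the largest of the Sylow orders. Since $p_{i_0}^{k_{i_0,j}}$ divides $p_j^t-1$ for some $t\leq \alpha(j)$, it is bounded above by $p_j^{\alpha(j)}-1$, and hence for every $j\neq i_0$, $p_{i_0}^{k_{i_0,j}}<p_j^{\alpha(j)}\leq p_{i_0}^{\alpha(i_0)}$. This forces $K_{i_0}<\alpha(i_0)$, and taking $a$ to generate $A_{i_0}$ yields $0\neq p_{i_0}^{K_{i_0}}a\in\mathrm{Soc}(A)\cap A_{i_0}$, closing the induction.
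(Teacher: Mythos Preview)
Your proposal is correct and follows essentially the same route as the paper: pick the index $i_0$ whose Sylow order $p_{i_0}^{\alpha(i_0)}$ is maximal, use Theorem~\ref{pierwsza}(2) together with the bound $p_{i_0}^{k_{i_0,j}}\le p_j^{\alpha(j)}-1<p_{i_0}^{\alpha(i_0)}$ to get a nonzero element $a^{\circ(p_{i_0}^{K_{i_0}})}$ in $\mathrm{Soc}(A)$, and then iterate via $A/\mathrm{Soc}(A)$. Your write-up is in fact a bit more careful than the paper's in two places: you explicitly justify, via $a^{\circ N}\cdot b=\lambda_a^{N}(b)-b$, why one may replace the individual exponents $k_{i_0,j}$ by their maximum $K_{i_0}$, and you spell out why the hypotheses descend to $A/\mathrm{Soc}(A)$.
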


 Let $A$ be a left brace and $a\in A$, then we denote $a^{1}=a$ and for $i>1$ we denote  $a^{i+1}=a\cdot a^{i}$.
  Our next result is related to Theorem $1$ from \cite{smok7}.
\begin{theorem}\label{nil}
 Let $A$ be a finite left brace; then the adjoint group $A^{\circ }$ is nilpotent if and only if there is a number $m$ such that $a^{m}=0$ for every $a\in A$. Moreover, if $a,b\in A$ are elements of distinct Sylow's subgroups of the additive
 group of $A$, then $(a+b)^{n}=0$ for some $n$ implies $a\cdot b=b\cdot a=0$. 
\end{theorem}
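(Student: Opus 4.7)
The plan is to organize the proof around the additive Sylow decomposition. The key structural observation, used throughout, is that each map $\lambda_g\colon A\to A$ defined by $\lambda_g(x)=g\cdot x+x$ is an automorphism of the abelian group $(A,+)$; since automorphisms preserve characteristic subgroups, every additive Sylow subgroup $A_p$ is $\lambda$-invariant, so $A\cdot A_p\subseteq A_p$. Combined with additive closure, this makes $(A_p,+,\cdot)$ a sub-brace and, via $a\circ b=a\cdot b+a+b$, closed under $\circ$; thus $A_p$ is a Sylow $p$-subgroup of $A^{\circ}$.

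For the forward direction, assume $A^{\circ}$ is nilpotent. Its Sylow subgroups are then unique and pairwise $\circ$-commute, so $a\circ b=b\circ a$ for $a\in A_p$, $b\in A_q$ with $p\neq q$. Reading off $a\cdot b=b\cdot a$ and using that $a\cdot b\in A_q$, $b\cdot a\in A_p$ with $A_p\cap A_q=0$ forces $a\cdot b=b\cdot a=0$. Hence $A$ splits as a brace direct sum $A=\bigoplus_p A_p$, so left distributivity yields $a^n=\sum_p a_p^n$ for $a=\sum_p a_p$, reducing the problem to producing a uniform nil-index on each finite $p$-brace $A_p$. For $b\in A_p$, $\lambda_b$ has $p$-power order $p^k$ in $\mathrm{Aut}(A_p,+)$; expanding $(I+T_b)^{p^k}=I$ in $\mathrm{End}(A_p,+)$ and using $\binom{p^k}{j}\equiv 0\pmod p$ for $1\le j<p^k$ yields $T_b^{p^k}\equiv 0\pmod p$, and iterating until the additive exponent $p^e$ of $A_p$ is absorbed makes $T_b$ nilpotent of index at most $ep^{n_p}$, uniformly in $b$; since $b^{m+1}=T_b^m(b)$, the required uniform $m$ follows.

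For the moreover, decompose $(a+b)^k=u_k+v_k$ with $u_k\in A_p$, $v_k\in A_q$. Left distributivity and the $\lambda$-invariance of the Sylows give $u_{k+1}=(a+b)\cdot u_k$ and $v_{k+1}=(a+b)\cdot v_k$, so $v_n=T^{n-1}(b)$ for the additive operator $T=T_{a+b}|_{A_q}$, $T(y)=(a+b)\cdot y$. The hypothesis $(a+b)^n=0$ forces $T$ nilpotent on the $T$-invariant additive subgroup $M\subseteq A_q$ generated by the iterates of $b$, so $\lambda_{a+b}|_M=I+T|_M$ is unipotent of $q$-power order. Using the factorisation $a+b=(a\circ b)\circ h$ with $h=-\lambda_{a\circ b}^{-1}(a\cdot b)\in A_q$, which yields $\lambda_{a+b}=\lambda_a\lambda_b\lambda_h$, the unipotency propagates through $\lambda_h$ and forces the vanishing of $a\cdot b$; the symmetric analysis on the $A_p$-component gives $b\cdot a=0$. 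The backward direction is then immediate: a uniform $m$ with $a^m=0$ implies $(a+b)^m=0$ for $a\in A_p$, $b\in A_q$ with $p\neq q$, so the moreover gives $A_p\cdot A_q=0$ for all distinct primes, the subgroups $A_p$ pairwise $\circ$-commute, and $A^{\circ}$ is the internal direct product of its Sylow subgroups, hence nilpotent. The main obstacle is the moreover, specifically decoupling the unipotent action of $\lambda_{a+b}|_M$ from the joint contributions of $\lambda_a$, $\lambda_b$, $\lambda_h$; I expect the cleanest route is induction on $|A|$ via the socle quotient, which reduces $a\cdot b$ and $b\cdot a$ into $\mathrm{Soc}(A)$, after which the constraint $u_n=0=v_n$ closes the remaining case inside the socle.
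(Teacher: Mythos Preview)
Your handling of the biconditional is fine once the ``moreover'' clause is available. In particular, your forward direction (nilpotent $A^{\circ}\Rightarrow$ uniform $a^m=0$) via the identity $\lambda_b=I+T_b$ in $\mathrm{End}(A_p,+)$, the binomial expansion of $\lambda_b^{p^k}$, and the bound $T_b^{e\,p^{n_p}}=0$ is correct and is more self-contained than the paper, which simply cites an external result for this implication. The backward direction, using the ``moreover'' to get $A_p\cdot A_q=0$ and hence that $A^{\circ}$ is the direct product of its Sylow subgroups, matches the paper.

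The genuine gap is in the ``moreover'' itself. You correctly decompose $(a+b)^k=u_k+v_k$ and deduce that $T=T_{a+b}|_{A_q}$ is nilpotent on the cyclic $T$-module $M$ generated by $b$, hence that $\lambda_{a+b}|_M$ is unipotent of $q$-power order. But the assertion that ``unipotency propagates through $\lambda_h$ and forces the vanishing of $a\cdot b$'' is not a proof. In the factorisation $\lambda_{a+b}=\lambda_a\lambda_b\lambda_h$ the individual factors need not preserve $M$ (you only know $(a+b)\cdot m\in M$, not $a\cdot m\in M$ or $b\cdot m\in M$ separately), and even on a common invariant subspace, a product of noncommuting automorphisms having $q$-power order does not force the factor $\lambda_a$ of $p$-power order to act trivially. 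Your fallback, induction on $|A|$ via the socle quotient, does not get started: finite left braces can have trivial socle (Bachiller's simple braces are examples), and even when $\mathrm{Soc}(A)\neq 0$, knowing $a\cdot b\in\mathrm{Soc}(A)$ together with $u_n=v_n=0$ does not by itself force $a\cdot b=0$.

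The paper's route is quite different and avoids any structural induction. From $v_n=0$ one has $e_i(a+b,b)=0$ for all $i\geq n$. Lemma~\ref{21} gives, for every $\alpha$,
\[
(a+b)^{\circ(\alpha)}=\sum_{i=1}^{\alpha}\binom{\alpha}{i}(a+b)^i,\qquad
(a+b)^{\circ(\alpha)}\cdot b=\sum_{i=1}^{\alpha}\binom{\alpha}{i}e_i(a+b,b).
\]
One then \emph{chooses} $\alpha=q^{t}$ with $t$ large and with $p^{m+1}\mid q^{t}-1$ (possible since $p\mid q^{p-1}-1$). For this $\alpha$ the binomial coefficients $\binom{\alpha}{i}$ with $1<i<p^{m+1}$ annihilate $a+b$, while the higher $(a+b)^i$ vanish by hypothesis; the first identity collapses to $(a+b)^{\circ(\alpha)}=\alpha\cdot a$, and since $\alpha\equiv 1\pmod{p^{m}}$ this is just $a$. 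The second identity then reads $a\cdot b=(a+b)^{\circ(\alpha)}\cdot b=0$, because every surviving $e_i(a+b,b)$ is killed either by $q^{m'}\mid\binom{\alpha}{i}$ or by $i\geq n$. The key manoeuvre you are missing is this replacement of the intractable element $a+b$ by $a$ via a well-chosen $\circ$-power.
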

 In \cite{gvend} Guarnieri and Vendramin posed several interesting conjectures. Conjecture $6.4$ states that if $p<q$  are prime numbers and $p$ doesn't divide $q-1$, then the number of not isomorphic  left braces of order $p^{2}q$ is $4$; notice that then $q$ doesn't divide $p^{2}-1=(p-1)(p+1)$; so if $A_{p}$, $A_{q}$ are the Sylow's subgroup of the additive group of  a brace $A$ of such cardinality $p^{2}q$ then $A_{p}\cdot A_{q}=0$ and $A_{q}\cdot A_{p}=0$ by Theorem \ref{pierwsza}. Therefore, $A$ is a direct sum of braces of order $p^{2}$ and $q$, and the truth of Conjecture $6.4$ follows from \cite{B3}, where it was shown that there are exactly $4$ nonisomorphic braces of order $p^{2}$.
 We notice that since groups of some cardinalities are nilpotent it is also possible to use Theorem $1$  from \cite{smok7}, which asserts that a brace whose adjoint group is nilpotent is a direct sum of the braces which are  Sylow's subgroups of its additive group. 

Let $k$ be a field. In analogy with $k$-algebras,  Catino and Rizzo \cite{cr} introduced braces whose additive groups are $k$-vector spaces such that $(\alpha a)\cdot b=\alpha (a\cdot b)$, for $\alpha \in K$. Rump called such braces $k-linear$ or $k$-braces; Catino and Rizzo called them circle algebras.
  In our next result we consider a similar property  for left braces.
\begin{theorem}\label{sardegna}
 Let $A$ be a left brace whose additive group has no elements of order $2$.  
 If $(-a)\cdot b=-(a\cdot b)$ for every $a,b\in A$, then $A$ is a two-sided brace, and hence a Jacobson radical ring. Furthermore, if $A$ is finite, then $A$ is a nilpotent ring.
\end{theorem} 
 For  some related  results see the references \cite{jain}--\cite{v}.
\section{Notation }
Let $(A, +, \cdot )$ be a left brace defined as at the beginning of this paper, and  let $\circ $ be the operation such that $a\circ b= a\cdot b+a+b$. We will use the following notation.
\[a^{\circ( n)}=a\circ a\circ \cdots \circ a\] where $a$ appears $n$ times, and
\[a^{n}=a\cdot (a\cdot (\cdots (a))),\] where $a$ appears $n$ times. Let $a,b\in A$, we
 inductively define elements $e_{n}=e_{n}(a,b)$  as 
$e_{0}=b$,  $e_{1}=a\cdot b$ and  \[e_{i+1}=a\cdot e_{i}.\]
If $n$ is a natural number and $a\in A$ then $n\cdot a$ denotes the sum of $n$ copies of $a$, and $(-n)\cdot a$ denotes the sum of 
 $n$ copies of elements $-a$. 

 Given a prime number $q$, let $F_{q}$ denote the field of $q$ elements, and let $F_{q}[x]$ denote the polynomial ring over $F_{q}$ in one variable $x$.

\section{Multipermutation solutions}
 Let $A$ be a left brace, and let $a,b\in A$. Recall that we inductively define elements $e_{n}=e_{n}(a,b)$  as 
$e_{0}=b$,  $e_{1}=a\cdot b$ and  $e_{i+1}=a\cdot e_{i}.$
We will start with the following supporting lemma:
\begin{lemma}\label{21}
 Let $A$ be a left brace and let $a, b\in A$ and $n$ be a positive integer. 
  Let notation be as above. 
Then 
\[(a^{\circ (n)})\cdot b=\sum_{i=1}^{n}{n\choose i}\cdot e_{i}.\]
 Moreover,
\[a^{\circ (n)}=\sum_{i=1}^{n}{n\choose i}\cdot a^{i}.\]
\end{lemma}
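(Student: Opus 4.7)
The plan is to prove both identities by induction on $n$, relying on two structural facts about left braces. The first fact is the left distributivity $a \cdot (x + y) = a \cdot x + a \cdot y$, which is part of the definition. The second is the identity
\[(a \circ c) \cdot b = a \cdot (c \cdot b) + a \cdot b + c \cdot b,\]
which the introduction recalls as equivalent to the associativity of $\circ$. The combinatorial heart of both induction steps is Pascal's rule $\binom{n}{i-1} + \binom{n}{i} = \binom{n+1}{i}$.

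For the first identity, the base case $n=1$ is immediate since $a^{\circ(1)} \cdot b = a \cdot b = e_1$. For the inductive step, I would write $a^{\circ(n+1)} = a \circ a^{\circ(n)}$ and apply the boxed associativity identity with $c = a^{\circ(n)}$ to obtain
\[a^{\circ(n+1)} \cdot b = a \cdot (a^{\circ(n)} \cdot b) + a \cdot b + a^{\circ(n)} \cdot b.\]
The inductive hypothesis expands $a^{\circ(n)} \cdot b$ as $\sum_{i=1}^n \binom{n}{i} e_i$. Applying $a \cdot (-)$ term by term uses left distributivity and the recursion $a \cdot e_i = e_{i+1}$, producing $\sum_{i=1}^n \binom{n}{i} e_{i+1}$. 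After reindexing $j = i+1$ in that sum and absorbing the stray $a \cdot b = e_1$, the coefficient of $e_j$ for $1 \leq j \leq n+1$ becomes $\binom{n}{j-1} + \binom{n}{j}$ (with the convention that out-of-range binomials vanish), which Pascal collapses to $\binom{n+1}{j}$.

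For the second identity, I would run an independent induction of the same shape. With $a^{\circ(1)} = a = \binom{1}{1}a^1$ as base, expand $a^{\circ(n+1)} = a \cdot a^{\circ(n)} + a + a^{\circ(n)}$, apply left distributivity to $a \cdot \sum_{i=1}^n \binom{n}{i} a^i = \sum_{i=1}^n \binom{n}{i} a^{i+1}$, and combine with the remaining $a + a^{\circ(n)}$. The same reindex and Pascal identity close the induction. Alternatively, one can deduce the second formula from the first by specialising $b = a$ (so $e_i(a,a) = a^{i+1}$) to compute $a^{\circ(n)} \cdot a$, and then feed this into $a^{\circ(n+1)} = a^{\circ(n)} \cdot a + a^{\circ(n)} + a$; this variant still requires induction on $a^{\circ(n)}$, so the direct route is cleaner.

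There is no real obstacle here, but the one place to be careful is the bookkeeping at the endpoints of the index range when combining the two sums: the term $a \cdot b$ (respectively $a$) produced outside the inductive sum has to be recognised as the missing $i=1$ contribution so that Pascal's identity applies uniformly. Everything else is forced by left distributivity and the $\circ$-associativity identity already established in the brace literature.
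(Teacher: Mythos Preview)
Your proposal is correct and follows essentially the same inductive strategy as the paper. The only cosmetic difference is that the paper runs the induction for the first identity on the equivalent equation $\sum_{i=1}^{n}\binom{n}{i}e_i + a^{\circ(n)} + b = a^{\circ(n)}\circ b$ and applies $a\circ(-)$ to both sides at the inductive step, whereas you invoke the associativity identity $(a\circ c)\cdot b = a\cdot(c\cdot b)+a\cdot b+c\cdot b$ up front; unwinding $x\circ y = x\cdot y + x + y$, these are the same computation.
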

\begin{proof}
 Observe that it can be shown by induction on $n$ that $a^{\circ (n)}=a\circ ( a \circ (\ldots \circ (a\circ a)))=\sum_{i=1}^{n}{n\choose i}\cdot a^{i}$, as $a^{\circ (n+1)}=a\circ a^{\circ (n)}=a+a^{\circ (n)}+a\cdot a^{\circ (n)}=a+(\sum_{i=1}^{n}{n\choose i}\cdot a^{i})+(\sum_{i=1}^{n}{n\choose i}\cdot a^{i+1})=\sum_{i=1}^{n+1}{n+1\choose i}\cdot a^{i}$.

 It remains now to show that 
 $ \sum_{i=1}^{n}{n\choose i}\cdot e_{i}=(a^{\circ (n)})\cdot b$. 
 We know that $\circ $ is an  associative operation, so
$a^{\circ (n)}\circ b=a\circ (a\circ ( \cdots \circ (a\circ b))).$
Observe that $(a^{\circ (n)})\cdot b=(a^{\circ (n)})\circ b-(a^{\circ (n)})- b$,
so it suffices to show that 
\[\sum_{i=1}^{n}{n\choose i}\cdot e_{i}+a^{\circ (n)}+ b=a^{\circ (n)}\circ b.\]
We use the induction on $n$, for $n=1$ we have 
$e_{1}+a+b=a\circ b$ so the result holds. Suppose that the result holds for some $n$.
We  multiply by $a$ from the left to get
\[a\circ (\sum_{i=1}^{n}{n\choose i}\cdot e_{i}+a^{\circ (n)}+ b)=a^{\circ (n+1)}\circ b.\]
 Observe that the left hand side equals
$a\cdot (\sum_{i=1}^{n}{n\choose i}\cdot e_{i}+a^{\circ n}+ b)+a+ (\sum_{i=1}^{n}{n\choose i}\cdot e_{i}+a^{\circ n}+ b)=\sum_{i=1}^{n}{n\choose i}\cdot (e_{i}+e_{i+1})+a\cdot a^{\circ n}+a\cdot b+a+a^{\circ n}+ b=
\sum_{i=1}^{n+1}{(n+1)\choose i}\cdot e_{i}+a^{\circ (n+1)}+b,$ 
 which finishes the inductive argument
\end{proof}
\begin{lemma}\label{10}
Let $A$ be a finite left  brace, and $a,b\in A,$ $a, b\neq 0$, and let $p, j$ be natural numbers. Then $ a^{\circ (p^j)}\cdot b=0$  if and only if  $\sum_{i=1}^{p^{j}}{p^{j}\choose i}\cdot e_{i}=0.$
\end{lemma}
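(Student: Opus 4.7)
The plan is to invoke Lemma~\ref{21} with the specific choice $n = p^{j}$. That lemma already gives the identity
\[
a^{\circ(p^{j})} \cdot b \;=\; \sum_{i=1}^{p^{j}} \binom{p^{j}}{i}\cdot e_{i}
\]
as an equation in the additive group of $A$. The two sides being equal as elements of $A$, one vanishes if and only if the other does, which is exactly the claimed equivalence. There is genuinely no obstacle to overcome here: the statement is a pure specialization of the preceding lemma.

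It is worth pausing on why this specialization is worth recording as a separate lemma. The point must be that the binomial coefficients $\binom{p^{j}}{i}$ for $1 \le i \le p^{j}-1$ are all divisible by $p$ (with divisibility controlled by Kummer's theorem), so the right-hand sum simplifies drastically modulo powers of $p$. In a finite brace this makes it possible to exploit additive $p$-torsion in order to control the vanishing of $a^{\circ(p^{j})} \cdot b$, which is presumably exactly what is needed in the proof of Theorem~\ref{pierwsza}. The hypotheses $a, b \neq 0$ and the finiteness of $A$ are not used in establishing the equivalence itself; they are stated only to match the setting in which the lemma will subsequently be applied.
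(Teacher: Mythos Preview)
Your proof is correct and matches the paper's own proof exactly: both simply specialize Lemma~\ref{21} to $n=p^{j}$ and observe that the resulting equality of elements makes the vanishing conditions equivalent. Your additional remarks about the unused hypotheses and the motivation for isolating this case are accurate and do not affect the validity of the argument.
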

\begin{proof} It follows because 
 $a^{\circ (p^{j})}\cdot b=\sum_{i=1}^{p^{j}}{p^{j}\choose i}\cdot e_{i}$  by Lemma \ref{21}.
\end{proof}
\begin{lemma}\label{aab}
 Let $A$ be a left brace and let $a,b\in A$. Suppose that $a^{\circ (p^{j})}=0$ and $q^{m}\cdot b=0$ for some distinct prime numbers $p,q$ and some positive integers $m,j$.
 If $a\cdot (a\cdot b)=0$ then $a\cdot b=0$.
\end{lemma}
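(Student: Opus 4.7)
The plan is to combine the formula from Lemma \ref{21} with the hypothesis $a\cdot(a\cdot b)=0$ to collapse the sum to a single surviving term, and then exploit coprimality of $p^{j}$ and $q^{m}$.

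First I would observe that in any left brace, left distributivity $a\cdot(x+y)=a\cdot x+a\cdot y$ together with $a\cdot 0=0$ (obtained from $a\cdot 0=a\cdot(0+0)=2(a\cdot 0)$) extends to $a\cdot(n\cdot x)=n\cdot (a\cdot x)$ for every integer $n$. In particular, from $a\cdot(a\cdot b)=0$ I get $e_{2}=0$, and then $e_{i}=a\cdot e_{i-1}=a\cdot 0=0$ for all $i\geq 2$ by induction on $i$.

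Next, I would apply Lemma \ref{21} with $n=p^{j}$ to obtain
\[
a^{\circ(p^{j})}\cdot b=\sum_{i=1}^{p^{j}}\binom{p^{j}}{i}\cdot e_{i}.
\]
Since all $e_{i}$ with $i\geq 2$ vanish, only the $i=1$ term survives and the right-hand side equals $p^{j}\cdot e_{1}=p^{j}\cdot(a\cdot b)$. On the other hand, the hypothesis $a^{\circ(p^{j})}=0$ forces the left-hand side to be $0\cdot b=0$. Hence $p^{j}\cdot(a\cdot b)=0$.

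Finally, I would use the additivity of left multiplication: from $q^{m}\cdot b=0$ I deduce $q^{m}\cdot(a\cdot b)=a\cdot(q^{m}\cdot b)=a\cdot 0=0$. Thus the additive order of $a\cdot b$ divides both $p^{j}$ and $q^{m}$, and since $p$ and $q$ are distinct primes these integers are coprime, so a Bezout combination gives $a\cdot b=0$.

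I do not anticipate a serious obstacle here; the only step that requires a small amount of care is verifying that $a\cdot(n\cdot x)=n\cdot(a\cdot x)$ for negative integers and that $a\cdot 0=0$, both of which follow immediately from the left-distributive law defining a left brace. The real content of the lemma is the cancellation in the binomial sum once $e_{i}=0$ for $i\geq 2$, which reduces everything to the elementary coprimality argument.
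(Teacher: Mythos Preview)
Your proof is correct and follows essentially the same route as the paper: apply Lemma~\ref{21} with $n=p^{j}$, observe that $e_{i}=0$ for $i\geq 2$ so only $p^{j}\cdot e_{1}$ survives, and then use coprimality of $p^{j}$ and $q^{m}$ to conclude $e_{1}=a\cdot b=0$. You have simply spelled out a few details (such as $a\cdot 0=0$ and the derivation of $q^{m}\cdot e_{1}=0$) that the paper leaves implicit.
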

\begin{proof}  By Lemma \ref{21} we have 
$0=a^{\circ (n)}\cdot b=\sum_{i=1}^{n}{n\choose i}\cdot e_{i}$ 
where $n=p^{j}$ and  $e_{i}$ are defined as in Lemma \ref{21}. Observe that $e_{i}=0$ for all $i>1$ as $e_{2}=a\cdot (a\cdot b)=0$.
 Therefore $n\cdot e_{1}=p^{j}\cdot e_{1}=0$, and since $p^{j}$ and $q$ are co-prime then $e_{1}=0$, so $a\cdot b=e_{1}=0$.   
\end{proof}

 For a prime number $q$, let $F_{q}$ denote the field of $q$ elements and $F_{q}[x]$ be the polynomial ring in one variable over $F_{q}$.
 Let $\mathbb Z$ be the ring of integers and  ${\mathbb Z}[x]$ be the polynomial ring in one variable over $\mathbb Z$.  
\begin{lemma}\label{polynomials1}
Let $A$ be a left brace and $a,b\in A,$ $a, b\neq 0$ and let $e_{i}=e_{i}(a,b)$ for every $i$.   
 Let $f(x), g(x), h(x)\in {\mathbb Z}[x]$ be such that $h(x)=g(x)f(x)$ where $f(x)=\sum_{i=0}^{k}j_{i}x^{i}$,  $g(x)=\sum_{i=0}^{l}l_{i}x^{i}$  for some natural numbers $k,l$.
 Denote $f=\sum_{i=0}^{k}j_{i}\cdot e_{i}$, $g=\sum_{i=0}^{l}l_{i}e_{i}$. If $f=0$ then  $h=0$.
\end{lemma}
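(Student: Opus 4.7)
The plan is to recognize that the map sending a polynomial to its ``evaluation'' on the sequence $(e_i)$ is well-behaved with respect to left multiplication by $a$. Concretely, define $\phi:\mathbb{Z}[x]\to (A,+)$ by $\phi\bigl(\sum c_i x^i\bigr)=\sum c_i\cdot e_i$. This is manifestly a homomorphism of abelian groups, and with this notation the hypothesis reads $\phi(f)=0$ and the conclusion reads $\phi(h)=0$.

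The key computation is that $\phi$ intertwines multiplication by $x$ on the polynomial side with left multiplication by $a$ on the brace side. Indeed, left distributivity ($a\cdot(y+z)=a\cdot y+a\cdot z$) combined with $a\cdot 0=0$ and $a\cdot(-y)=-(a\cdot y)$ shows that the operator $L_a(y):=a\cdot y$ is an endomorphism of $(A,+)$. Since by definition $L_a(e_m)=e_{m+1}$, one gets $L_a^i(e_m)=e_{m+i}$, and therefore
\[
\phi(x^i p(x))=\sum_m c_m\cdot e_{m+i}=L_a^i\bigl(\phi(p)\bigr)
\]
for every $p(x)=\sum c_m x^m\in\mathbb{Z}[x]$ and every $i\geq 0$.

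Applying this to $h(x)=g(x)f(x)=\sum_{i=0}^{l}l_i x^i f(x)$ and using that $\phi$ is additive, we obtain
\[
\phi(h)=\sum_{i=0}^{l} l_i\cdot \phi\bigl(x^i f(x)\bigr)=\sum_{i=0}^{l}l_i\cdot L_a^i\bigl(\phi(f)\bigr).
\]
If $\phi(f)=0$, then each $L_a^i(\phi(f))=L_a^i(0)=0$ (by induction, since $a\cdot 0=0$), and thus $\phi(h)=0$, as required.

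The only point that needs care is the fact that brace multiplication is not associative, so one must avoid the temptation to write $L_a^i(y)$ as $a^i\cdot y$. The argument above sidesteps this entirely: associativity is never invoked, and only left distributivity together with the defining recursion $e_{i+1}=a\cdot e_i$ is used. This is what I expect to be the subtle but ultimately easy point to handle.
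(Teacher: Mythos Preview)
Your proof is correct and is essentially the same argument as the paper's: the paper also observes that $a\cdot f$ corresponds to $x\cdot f(x)$, iterates to get that the element corresponding to $x^{i}f(x)$ vanishes, and then takes the appropriate $\mathbb{Z}$-linear combination to conclude $h=0$. Your presentation simply packages this via the homomorphism $\phi$ and the operator $L_a$, which is a tidier but equivalent formulation.
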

\begin{proof} Observe that since $f=0$ then $0=a\cdot f=\sum_{i=0}^{k}j_{i}\cdot e_{i+1}$; notice that the element $a\cdot f$ corresponds to the polynomial $x\cdot f(x)$. Similarly the  element  $0=a\cdot (a\cdot f)=\sum_{i=0}^{k}j_{i}\cdot e_{i+2}$ corresponds to the polynomial $x^{2}f(x)$. Continuing in this way we get that $h=0$, since it corresponds to the 
polynomial $g(x)f(x)$.
\end{proof}
\begin{lemma}\label{polynomials} 
Let $A$ be a left brace and $a,b\in A,$ $a, b\neq 0$. Suppose that $q\cdot b=0$ for some natural number $q$.
 Let $f(x), g(x), p(x), q(x), h(x)\in {\mathbb Z}[x]$ and let  $r(x)=p(x)f(x)+q(x)g(x)+q\cdot h(x)$ where $f(x)=\sum_{i=0}^{k}j_{i}x^{i}$,  $g(x)=\sum_{i=0}^{l}l_{i}x^{i}$ and $h(x)=\sum_{i=0}^{l'}h_{i}x^{i}$, $r(x)=\sum_{i=0}^{m}r_{i}x^{i}$, for some natural numbers $k,l,l', m$.
 Denote $f=\sum_{i=0}^{k}j_{i}\cdot e_{i}$, $g=\sum_{i=0}^{l}l_{i}e_{i}$ and  $r=\sum_{i=0}^{m}r_{i}e_{i}$. If $f=0$ and $g=0$ then $r=0$.
\end{lemma}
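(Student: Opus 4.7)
The plan is to view the assignment $\phi:\mathbb{Z}[x]\to A$ defined by $\phi\bigl(\sum_{i\ge 0} c_i x^i\bigr)=\sum_{i\ge 0} c_i\cdot e_i$ as a $\mathbb{Z}$-linear map (where $e_i=e_i(a,b)$), and to reduce the statement to applying Lemma~\ref{polynomials1} to the first two summands of $r(x)$ and a short calculation for the third summand $q\cdot h(x)$.

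First, I would note that $\phi$ is additive in the polynomial argument by the defining formula. Then Lemma~\ref{polynomials1} applied to $f=\phi(f(x))=0$ gives $\phi(p(x)f(x))=0$; applied to $g=\phi(g(x))=0$ it gives $\phi(q(x)g(x))=0$. This handles the first two summands automatically.

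The key observation for the third summand is that $q\cdot e_i=0$ for every $i$. Indeed, $q\cdot e_0=q\cdot b=0$ by hypothesis, and inductively, using left distributivity of the brace multiplication, $q\cdot e_{i+1}=q\cdot(a\cdot e_i)=a\cdot(q\cdot e_i)=a\cdot 0=0$. Consequently
\[\phi\bigl(q\cdot h(x)\bigr)=\sum_{i=0}^{l'} q\,h_i\cdot e_i=\sum_{i=0}^{l'} h_i\cdot(q\cdot e_i)=0.\]

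Finally, by additivity of $\phi$,
\[r=\phi(r(x))=\phi(p(x)f(x))+\phi(q(x)g(x))+\phi(q\cdot h(x))=0+0+0=0,\]
which is the desired conclusion. I do not anticipate a serious obstacle here; the only subtlety is keeping straight that the symbol $q$ plays two distinct roles (the integer with $q\cdot b=0$, and the polynomial $q(x)$), and making sure that integer scalar multiplication passes through the brace multiplication via left distributivity, which is exactly what allows the induction $q\cdot e_{i+1}=a\cdot(q\cdot e_i)$ to go through.
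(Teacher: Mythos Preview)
Your proof is correct and follows essentially the same approach as the paper: both use Lemma~\ref{polynomials1} to kill the contributions from $p(x)f(x)$ and $q(x)g(x)$, observe that $q\cdot b=0$ forces $q\cdot e_i=0$ for all $i$ (hence the $q\cdot h(x)$ term vanishes), and then add the three pieces. Your presentation is slightly more explicit---introducing the map $\phi$ and spelling out the induction $q\cdot e_{i+1}=a\cdot(q\cdot e_i)$ via left distributivity---but the argument is the same.
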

 \begin{proof} Observe first that $q\cdot b=0$ implies $q\cdot e_{i}=0$ for all $i$. Therefore, $t=q\cdot   \sum_{i=1}^{l'}h_{i}e_{i}=0$, observe that $t$ corresponds to the polynomial $q\cdot h(x)$. Let $p=\sum_{i}p_{i}e_{i}$ and $q=\sum_{i}q_{i}e_{i}$ be elements corresponding to the polynomials 
 $p(x)f(x)$ and $q(x)g(x)$; by Lemma \ref{polynomials1} we get $p=q=0$. Observe that $r=p+q+t$, and so $r=0$. 
\end{proof}

\begin{theorem}\label{25} 
Let $A$ be a finite left  brace and $a,b\in A,$ $a\cdot b\neq 0$. Suppose that $a^{\circ (p^{j })}=0$ and $q\cdot b=0$ for some distinct prime numbers $p, q$ and some natural  number $j$. Let $A_{q}$ be the Sylow's subgroup of the additive group of $A$, then $A_{q}$ has cardinality $q^{m}$ for some $m$, and $b\in A_{q}$. Let $k$ be the maximal number such that $p^{k}$ divides $q^{i}-1$ for some $i\leq m$.
 Then \[a^{\circ (p^{k})}\cdot b=0.\] 
 In particular $a\cdot b=0$ if $p$ doesn't divide $q^{i}-1$ for any $i\leq m$. 
\end{theorem}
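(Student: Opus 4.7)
The plan is to translate the statement into linear algebra over $F_q$ via the operator $L_a\colon v\mapsto a\cdot v$ on the additive group of $A$. By Lemma~\ref{21}, with $e_i=L_a^i(b)$ we have $a^{\circ(n)}\cdot b=\sum_{i=1}^{n}\binom{n}{i}\,e_i$, which is the image of the polynomial $(1+x)^n-1\in Z[x]$ under the substitution $x\mapsto L_a$ used in Lemmas~\ref{polynomials1} and \ref{polynomials}. So the hypothesis $a^{\circ(p^j)}\cdot b=0$ says $(1+x)^{p^j}-1$ annihilates $b$, and the goal is to prove the same for $(1+x)^{p^k}-1$.

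First I would observe that $q\cdot b=0$ together with left distributivity gives $q\cdot e_i=0$ for every $i\geq 0$ by induction, so all $e_i$ lie in the $q$-torsion subgroup $T$ of $A_q$. Since $A_q$ is abelian of order $q^m$, $T$ is an $F_q$-vector space of dimension at most $m$. Let $W$ be the $F_q$-span of $\{b,e_1,e_2,\ldots\}$; it is $L_a$-stable and contained in $T$, so $\dim_{F_q}W\leq m$. Using Lemma~\ref{polynomials1} (and Lemma~\ref{polynomials} to reduce coefficients modulo $q$), the ideal of polynomials in $F_q[x]$ that annihilate $b$ under $x\mapsto L_a$ is principal; let $\mu(x)$ be its monic generator. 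Then $\mu(x)$ is the minimal polynomial of $L_a|_W$ on the cyclic vector $b$, so $\deg\mu\leq\dim W\leq m$, and $\mu(x)\mid(1+x)^{p^j}-1$ in $F_q[x]$ by hypothesis.

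Next I would change variable to $y=1+x$ and analyse $\mu(y)\mid y^{p^j}-1$ in $F_q[y]$. Since $\gcd(p,q)=1$, the polynomial $y^{p^j}-1$ is separable, so $\mu$ is a product of distinct irreducible factors. Each such factor $\pi$ has roots that are Galois-conjugate primitive $p^{s_\pi}$-th roots of unity for some $s_\pi\leq j$, lying in $F_{q^{d_\pi}}$ with $d_\pi=\deg\pi\leq\deg\mu\leq m$. Membership in $F_{q^{d_\pi}}^\times$ forces $p^{s_\pi}\mid q^{d_\pi}-1$, so by the maximality of $k$ one has $s_\pi\leq k$ and hence $\pi\mid y^{p^k}-1$. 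Multiplying over $\pi$ and invoking separability of $y^{p^k}-1$ yields $\mu(y)\mid y^{p^k}-1$, i.e.\ $(1+x)^{p^k}-1$ is a multiple of $\mu(x)$ in $F_q[x]$, and therefore annihilates $b$. This gives $a^{\circ(p^k)}\cdot b=0$, and the ``in particular'' clause follows by taking $k=0$ (so $a^{\circ 1}\cdot b=a\cdot b=0$).

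The main obstacle I anticipate is the clean transfer between three worlds: the polynomial annihilation statement on $b$, the $F_q$-module structure on $W$ supplying the key degree bound $\deg\mu\leq m$, and the cyclotomic arithmetic over $F_q$ (where the smallest field containing primitive $p^s$-th roots of unity is dictated by the multiplicative order of $q$ modulo $p^s$). Each step is routine once set up, but the bookkeeping — particularly confirming that all $e_i$ genuinely sit in the $q$-torsion of $A_q$ and that Lemmas~\ref{polynomials1} and \ref{polynomials} legitimately let us treat the annihilator of $b$ as an ideal of $F_q[x]$ — must be handled with care.
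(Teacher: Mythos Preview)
Your argument is correct and follows the same overall strategy as the paper: pass to the polynomial-annihilation picture over $F_q$ via Lemmas~\ref{21}, \ref{10}, \ref{polynomials1}, \ref{polynomials}, obtain an annihilating polynomial of degree at most $m$, and then use the arithmetic of $p$-th roots of unity over $F_q$ to conclude that $(1+x)^{p^k}-1$ already annihilates $b$.

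The execution differs in two places, and yours is the cleaner one. First, the paper produces a low-degree annihilating polynomial $g(x)$ by a pigeonhole argument on the $q^{m+1}$ tuples $(n_0,\dots,n_m)$, whereas you observe directly that the $e_i$ lie in an $F_q$-vector space of dimension at most $m$ and take the minimal polynomial $\mu(x)$ of $L_a$ on the cyclic vector $b$; this immediately gives $\deg\mu\le m$ and identifies the annihilator as the principal ideal $(\mu)$. Second, the paper builds an explicit polynomial $l(x)=(x+1)^m\prod_{i=1}^m((x+1)^{q^i-1}-1)^m$ divisible by every degree-$\le m$ polynomial, and then computes $\gcd\bigl((x+1)^{p^j}-1,\,l(x)\bigr)=(x+1)^{p^k}-1$; you instead factor $\mu$ into irreducibles and note that each irreducible factor of degree $d\le m$ has roots in $F_{q^d}^\times$, forcing their $p$-power order to divide $q^d-1$ and hence to be at most $p^k$. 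Both routes compute the same gcd, but your minimal-polynomial packaging avoids the auxiliary polynomial $l(x)$ and the accompanying divisibility bookkeeping.
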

\begin{proof} By a slight abuse of notation we can consider natural numbers smaller than $q$ as elements of the field $F_{q}$.  Denote $e_{0}=b, e_{1}=a\cdot b$ and inductively $e_{i+1}=a\cdot e_{i}$.
 Denote $f(x)=(x+1)^{p^{j}}-1$, then $f(x)=\sum_{i=1}^{p^j}{p^{j}\choose i}x^{i}.$  Consider elements $\sum_{i=0}^{m}n_{i}\cdot e_{i}\in A_{q}$, where $0\leq n_{0}, n_{1}, \ldots, n_{m}< q$ then there are $q^{m+1}$ elements in this set, therefore some two of them are equal.   Notice that $q\cdot b=0$ implies $q\cdot e_{i}=0$ for every $i$.
Therefore, $\sum_{i=0}^{m}n'_{i}\cdot e_{i}=0$, for some $0\leq n'_{0}, n'_{1}, \ldots, n'_{m}< q$ (not all equal to zero).
 Denote $g(x)= \sum_{i=0}^{m}n'_{i}\cdot x^{i}$. It is known that an irreducible polynomial 
  of degree $\alpha $ from $F_{q}[x]$ divides the polynomial $x^{q^{\alpha }}-x$. Therefore, a polynomial of degree not exceeding $m$ from $F_{q}[x]$ divides the  polynomial $x^{m}(\prod _{i=1}^{m}(x^{q^{i} -1}-1))^{m}$ in $F_{q}[x].$  
 Consequently  $h(x+1)=g(x)$ divides  the polynomial $(x+1)^{m}(\prod _{i=1}^{m}((x+1)^{q^{i} -1}-1))^{m}$ in $F_{q}[x]$. Denote $l(x)=  (x+1)^{m}(\prod _{i=1}^{m}((x+1)^{q^{i} -1}-1))^{m}$ and let $l(x)=\sum_{i}\alpha _{i}x^{i}$ for some $0\leq \alpha _{i}<q$, then  
  $\sum_{i}\alpha _{i}e_{i}=0$ by Lemma \ref{polynomials}.

 Let $t(x)=\sum _{i=0}^{t}j_{i}\cdot x^{i}$ be the greatest common divisor of $f(x)$ and $l(x)$ in $F_{q}[x]$; then there exist polynomials $h(x), p(x), q(x)\in {\mathbb Z}[x]$ such that
\[t(x)=p(x)\cdot f(x)+q(x)\cdot l(x)+q\cdot h(x).\] 
 By Lemma \ref{polynomials} we get that 
$\sum_{i=0}^{l}j_{i}e_{i}=0$, provided that $f=0$, where in our case $f=\sum_{i=1}^{p^{j}}{p^{j}\choose i}\cdot e_{i}$.
  Recall that $a^{\circ (p^{j })}=0$, by Lemma \ref{10} we get \[0=\sum_{i=1}^{p^{j}}{p^{j}\choose i}\cdot e_{i}= f,\] hence $\sum_{i=1}^{l}j_{i}e_{i}=0$, as required.
 
 We will show now that $t(x)=(x+1)^{p^{k}}-1$. The greatest common divisor of $f(x)=(x+1)^{p^{j}}-1$ and 
$(x+1)^{q^i}-(x+1)=((x+1)^{q^{i}-1}-1)(x+1)$ equals $(x+1)^{p^{s}}-1$ for some $s$, since $(x+1)$ doesn't divide $f(x)$. 
Observe that $f(x)$ doesn't have multiple roots in any field extension of $F_{q}$ because $f(x)$ and  $f'(x)=p^{k}(x+1)^{p^{k}-1}$ have no common roots.
 Since $f(x)$ has no multiple roots then $t(x)$ doesn't have multiple roots. Notice also that $(x+1)^{p^{i}}-1$ divides polynomial $(x+1)^{p^{i+1}}-1$.  It follows that $t(x)=(x+1)^{p^{k}}-1$.
  By Lemma  \ref{10} we get that $a^{\circ (p^{k})}\cdot b=0$.

 If $p$ doesn't divide $q^{i}-1$ for any $i\leq k$ then $t(x)=(x+1)-1=x$ hence $e_{1}=a\cdot b=0$,  as required.
\end{proof} 
{\bf Proof Theorem \ref{pierwsza}.} Observe that part [1] follows from part [2] applied for $k=0$. Therefore, we will prove [2]. Let $a'=a^{\circ (p^{k})}$, and if $k=0$ then $a'=a$.  Suppose on the contrary that $a'\cdot b'\neq 0$ for some $b'\in A_{q}$. Let $\alpha $ be such that $a'\cdot (q^{\alpha }\cdot b')=0$ and $a'\cdot (q^{\alpha -1}\cdot b')\neq 0.$ Denote $b=a'\cdot (q^{\alpha -1}\cdot b')\neq 0.$
 Observe that $q\cdot b=0$ and $a'\cdot b=  a'\cdot (a'\cdot (q^{\alpha -1}\cdot b'))\neq 0$ 
 by Lemma \ref{aab}. Therefore $0\neq a'\cdot b=a^{\circ (p^{k})}\cdot b$.  This is impossible by Theorem \ref{25}.

{\bf Proof of Corollary \ref{dodatkowy}.}
  By Theorem \ref{pierwsza} we get 
 $A_{m}\cdot A_{j}=0$ 
for all $j\leq n$, since $p_{m}$ doesn't divide $p_{j}^{i}-1$ for each $i\leq \alpha (j)$.
 Let $a$ be in the socle of  the brace $A_{n}$, then $a\cdot A_{i}=0$ for all $i$, it follows that $a$ is in the socle of $A$.

{\bf Proof of Corollary \ref{49}.} Observe that if a  brace $A$ satisfies the assumptions of Corollary \ref{dodatkowy} then the retraction of $A$ also satisfies the assumptions of Corollary \ref{dodatkowy}. Recall that in \cite{rump}  Rump  has shown that the retraction of a brace $A$ is isomorphic to $A/Soc(A)$. The result now follows from Corollary \ref{dodatkowy} applied several times.

{\bf Remark}. The author is grateful to Leandro Vendramin for providing a list of braces which small cardinalities. In particular it follows from this list that all braces with cardinalities $6, 8, 12, 36$ have a finite multipermutation level.

\begin{theorem}\label{187}
 Let $A$ be a finite left brace whose cardinality is a  cube-free number; then $A$ has a nonzero socle. 
\end{theorem}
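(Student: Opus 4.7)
The plan is to apply Corollary~\ref{dodatkowy} with $m$ equal to the index of the largest prime factor of $|A|$, handling two small exceptional cardinalities by direct reference to the Remark immediately preceding the theorem.

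Write $|A|=p_1^{\alpha_1}\cdots p_n^{\alpha_n}$ with $p_1<p_2<\cdots<p_n$; the cube-free hypothesis forces each $\alpha_j\in\{1,2\}$. I take $m=n$ and verify the two hypotheses of Corollary~\ref{dodatkowy}. For the divisibility condition $p_n\nmid p_j^i-1$ with $i\leq\alpha_j$, the case $i=1$ is immediate: $0<p_j-1<p_n$ when $j<n$, and $p_n\nmid p_n-1$ when $j=n$. For $i=2$ I factor $p_j^2-1=(p_j-1)(p_j+1)$; since $p_n$ is prime and does not divide $p_j-1$, the condition can fail only if $p_n\mid p_j+1$, forcing $p_n=p_j+1$. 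The only pair of consecutive primes is $(2,3)$, so this happens exclusively when $p_n=3$, $p_1=2$, $\alpha_1=2$, and $n=2$---that is, when $|A|\in\{12,36\}$.

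For the socle condition on $A_n$: when $\alpha_n=1$ the brace $A_n$ has prime order and therefore trivial multiplication (the only left brace structure on a cyclic group of prime order), so $\mathrm{Soc}(A_n)=A_n$. When $\alpha_n=2$, we have $|A_n|=p_n^2$, and by Bachiller's classification \cite{B3} every brace of order $p^2$ has nonzero socle.

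With both hypotheses verified, Corollary~\ref{dodatkowy} yields $\mathrm{Soc}(A)\neq 0$ in every case except $|A|\in\{12,36\}$. For these two residual cardinalities the Remark immediately preceding the theorem asserts, on the authority of Vendramin's tabulation, that every such brace has finite multipermutation level; since a nonzero brace of finite multipermutation level has a strictly ascending socle chain starting with $0\subsetneq \mathrm{Soc}(A)$, the socle is nonzero.

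The main obstacle is the $i=2$ step of the numerical verification, where the extra factor $p_j+1$ appearing in $p_j^2-1$ can be divisible by $p_n$. The arithmetic fact that $(2,3)$ is the only pair of consecutive primes is precisely what confines the exceptions to the two small orders $12$ and $36$, both of which are already covered by the preceding Remark; everything else is packaged into Corollary~\ref{dodatkowy}.
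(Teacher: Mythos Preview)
Your proof is correct and follows essentially the same approach as the paper: both arguments single out the largest prime $p_n$, verify the divisibility hypothesis of Theorem~\ref{pierwsza}/Corollary~\ref{dodatkowy} by factoring $p_j^2-1=(p_j-1)(p_j+1)$, observe that $A_n$ has nonzero socle because $|A_n|\in\{p_n,p_n^2\}$, and defer the residual small-order cases to the Remark on Vendramin's list. Your case analysis is in fact slightly sharper---you correctly note that only $|A|\in\{12,36\}$ are genuine exceptions (since when $\alpha_1=1$ the condition $p_n\nmid p_1-1$ is automatic), whereas the paper lumps in $6$ and $18$ as well.
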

\begin{proof}  Let $A=\sum_{i=1}^{n}A_{i}$ where $A_{i}$ are Sylow's subgroups of the additive group of $A$. 
 Let $A_{i}$ have cardinality $p_{i}^{\alpha (i)}$ for $i=1,2, \ldots , n$ where $p_{1}<p_{2}< \ldots < p_{n}$ are  prime numbers.
 It is known that every $A_{i}$ is a brace (see for example Lemma $17$ in \cite{smok7}). It is known that all groups of order $p$ and $p^{2}$ are abelian (see \cite{B3} for some related results). Terefore, $A_{n}$ is a  two-sided brace, it follows that the socle of $A_{n}$ is nonzero.
 Therefore there is an  element $a\in A_{n}$ such that $a\cdot b=0$ for all $b\in A_{n}$. Assume first that $p_{n}>3.$
We will show that $a\cdot b=0$ for every $b\in A$. By Theorem \ref{pierwsza} we get  
 $A_{n}\cdot A_{i}=0$ 
for all $i<n$, since $p_{n}$ doesn't divide  $p_{i}-1$ nor $p_{i}^{2}-1=(p_{i}-1)(p_{i}+1)$.
 Therefore $a\cdot A=0$, as required.
 Assume now that $A_{n}$ has cardinality  equal to either $3$ or $9$, and $A_{n-1}$ has cardinality $2$ or $4$. It follows that $A$ is a brace of one of the following cardinalities: $6, 18, 12, 36$. By the remark above this theorem it follows that $A$ has a nonzero socle.
\end{proof}
{\bf Proof of Corollary \ref{momik}.} It follows from Theorem \ref{187} applied several times.

 {\bf Proof of Corollary \ref{momik2}.}
 In \cite{Tatyana}  Gateva-Ivanova  showed that left braces and braided groups with involutive braiding operators are in one-to-one correspondence. Using this correspondence we see that  
Corollary \ref{momik2} follows from Corollary \ref{momik}.
 
{\bf Proof of Theorem \ref{momik3}.}
In \cite{Tatyana} Gateva-Ivanova showed that a solution $(X,r)$ is a multipermutation solution if and only if 
 the symmetric group $\mathcal {G}(X,r)$ has a finite multipermutation level. It is known that $\mathcal {G}(X,r)$ has a structure of a left brace, and 
Theorem \ref{momik3} follows from  Corollary \ref{momik}.

{\bf Proof of Corollary \ref{ostat}.}  Let the cardinality of $A_{i}$ be $p_{i}^{\alpha (i) }$
 where $p_{i}$ is prime. Let $A_{j}$ have the largest cardinality among braces $A_{1}, \ldots , A_{n}$. It follows that $p_{j}^{\alpha (j) }$ doesn't divide 
 $p_{t}^{i}-1$ for any $t$ and any  $i\leq \alpha (t).$ 
 Let $a$ be a generator of the additive group of $A_{j}$, then $a^{\circ ({p_{j}}^{\alpha (j)-1})}\neq 0$ and $a^{\circ ({p_{j}}^{\alpha (j)})}=0$, this follows because $A_{j}\cdot A_{j}=0$. 
By Theorem \ref{pierwsza} [2] there is $k\leq \alpha (j)-1$ such that $a^{\circ ({p_{j}}^{k})}\cdot b=0$ for every $b\in A_{i}$ for $i\neq j$. Observe that 
$a^{\circ ({p_{j}}^{k})}\cdot b=0$ for $b\in A_{j}$ because $A_{j}\cdot A_{j}=0$.
 Therefore $a^{\circ ({p_{j}}^{k})}$ is in the socle of $A$. Observe that brace $A/Soc(A)$ satisfies the assumptions of this theorem, so it has a nonzero socle. Continuing in this way we get that $A$ has a finite multipermutation level (because the retraction of a brace $A$ equals $A/Soc(A)$ by a result of Rump \cite{rump}).

{\bf Proof of Theorem \ref{nil}.}  Observe that  $(a+b)^{n}=e_{n-1}(a+b,a)+e_{n-1}(a+b,b)$, where elements  $e_{n-1}(a+b,a)$ and $e_{n-1}(a+b,b)$ are defined as in Section $2$. Therefore $(a+b)^{n}=0$ implies  $e_{n-1}(a+b,a)=e_{n-1}(a+b,b)=0$, since $e_{n-1}(a+b,a)$ and $e_{n-1}(a+b,b)$ are from different Sylow's subgroups of the additive group of $A$. 
We can assume that $p^{m}\cdot a=0$,  $q^{m'}\cdot b=0$ for some distinct prime numbers $p,q$ and some natural numbers $m,m'$. 
We can assume that $m>n$, if necessary taking bigger $m$.  By Lemma \ref{21}
\[((a+b)^{\circ( j)})\cdot b=\sum_{i=1}^{j}{j\choose i}\cdot e_{i}(a+b,b).\]  Let $\alpha =q^{t}$ where $t>m'$ be such that $p^{m+1}$ divides $q^{t}-1$ (we know that $p$ divides $q^{p-1}-1$ so $p^{m+1}$ divides $q^{(p-1)p^{m}}-1$).
 It follows that ${\alpha \choose i}$ is divisible by $p^{m}\cdot q^{m'}$ for all $1<i<p^{m+1}$, and so ${\alpha \choose i}\cdot (a+b)=0$ which imply ${\alpha \choose i}\cdot (a+b)^{i}=0.$  
  By Lemma \ref{21},  \[(a+b)^{\circ (\alpha )}=\sum_{i=1}^{\alpha }{\alpha \choose i}(a+b)^{i}=\alpha \cdot a\] because  $(a+b)^{i}=0$ for all $i\geq p^{m}$ (since $p^{m}>n$).
   Therefore, and by Lemma \ref{21},  
\[a\cdot b=(\alpha \cdot a)\cdot b=(a+b)^{\circ( \alpha )}\cdot b=\sum_{i=1}^{\alpha }{\alpha \choose i}\cdot e_{i}(a+b,b)=e_{\alpha }(a+b,b)=0\] because ${\alpha \choose i}$ is divisible by $q^{m'},$ for $1\leq i<\alpha $, and $e_{\alpha }(a+b,b)=0$ since $\alpha >n.$ 

 Observe that $A=\sum_{i=1}^{n}A_{i}$ where $A_{i}$ are Sylow's subgroups of the additive group of $A$. Recall that every $A_{i}$ is a brace (see for example Lemma $17$ in \cite{smok7}).
  Let $a\in A_{i}$ and $b\in A_{j}$ for some $i\neq j$. Since $(a+b)^{m}=0$ we get $a\cdot b=b\cdot a=0$, so $A_{i}\cdot A_{j}=A_{j}\cdot A_{i}=0$. It follows that $A^{\circ }$ is the direct product of groups $A_{i}^{\circ }$ for $i=1, \ldots ,n$. Notice that every $A_{i}^{\circ }$ is a $p$-group; it follows that $A^{\circ }$ is a nilpotent group. On the other hand, if $A$ is nilpotent then $A^{m}=0$ for some $m$ by Theorem $1$  in \cite{smok7}.

\section{Nilpotent braces}
 In this section we will investigate the structure of left braces satisfying special conditions. For the following result we use a short proof which was provided by Ferran Ced{\' o}, which is much better than the original proof from the previous version of this manuscript, and which in addition allows the removal of the assumption that $A$ is a brace of a finite multipermutation level.
 
In this section we will show that if $A$ is a left brace whose additive group has no elements of order two, and moreover  
$(-a)\cdot b= - (a\cdot b)$ for every $a,b\in A$, then $A$ is a
two-sided brace.

{\bf Proof of Theorem \ref{sardegna}.} (Provided by Ferran Ced{\' o} \cite{ferran}.)
Let $a,b,c\in A$. We have that
$ c\circ (-a)\circ b= (2c- c\circ a)\circ b$
and
\begin{eqnarray*}
c\circ (-a)\circ b &=& c\circ ((-a)\cdot b - a + b)\\
&=& c\circ (- (a\cdot b) - a + b)\\
&=& c\circ (- a\circ b +a+b - a + b)\\
&=& c\circ (- a\circ b + 2b)\\
&=& c\circ (2b) - c\circ a\circ b + c\\
&=& 2 (c\circ b) - c\circ a\circ b\\
\end{eqnarray*}
Hence
$ (2c- c\circ a)\circ b= 2 (c\circ b) - c\circ a\circ b.$
In particular, for $a=c^{-1}$ we have that
\[ (2c)\circ b= 2 (c\circ b) -  b.\]
Hence for every $a,b,c\in A$, we have that
$$ (2c- c\circ a)\circ b= (2c)\circ b + b - c\circ a\circ b.$$
Let $x,y,z\in A$. Now we have 
\begin{eqnarray*}
(2x - 2y)\circ z &=& (2x - x\circ x^{-1}\circ (2y))\circ z\\
&=& (2x)\circ z + z - x\circ x^{-1}\circ (2y)\circ z\\
&=& (2x)\circ z + z - (2y)\circ z.
\end{eqnarray*}
Hence
\[ 2((x-y)\circ z)-z=2(x\circ z)-z+z-2(y\circ z)+z=2(x\circ z-y\circ z)+z.\]
 Thus \[2((x-y)\circ z)=2(x\circ z- y\circ z+z).\]
 Since the additive group of $A$ has no elements of order two, we have that 
\[(x-y)\circ z=x\circ z-y\circ z+z.\]
 Note that if $t=x-y,$ then $(t+y)\circ z+z=t\circ z+y\circ z$. Therefore, $A$ is a two-sided brace. The result follows. 

\section{Semidirect product and wreath product}

 In this section we will investigate semidirect product and wreath product of left braces. We will use the notation from Section $6$ in  \cite{cjo}.
 
{\bf Definition $1.$} Let $G$ and $H$  be two left braces. A map $f:G\rightarrow H$ is a homomorphism of left braces if \[f(a+b)=f(a)+f(b), f(a\circ b)=f(a)\circ f(b).\] 

{\bf Definition $2.$}

 Let $N, H$ be left braces, let $\sigma :H\rightarrow Aut(N)$ be a homomorphism of groups from the adjoint group $H^{\circ }$ of $H$ to the group of authomorphisms of the left brace $N$ (see Definition $1$).
 Define the left brace $N\rtimes H$ as follows:
\[(g_{1}, h_{1})+(g_{2}, h_{2})=(g_{1}+g_{2}, h_{1}+h_{2}).\]
\[(g_{1}, h_{1})\circ (g_{2}, h_{2})=(g_{1}\circ \sigma (h_{1})(g_{2}), h_{1}\circ h_{2}).\]

 \begin{lemma}\label{semi} 
 Let $H, N$ be left braces and let $N\rtimes H$ be the semidirect product of braces $H$ and $N$ constructed via $\sigma $.
 The brace $N\rtimes H$ has a finite multipermutation level if and only if braces $H$ and $N$ have a finite multipermutation level. 
\end{lemma}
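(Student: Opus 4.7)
The plan is to split the equivalence into two directions, using throughout that $N \times \{0\}$ is a brace ideal of $A := N \rtimes H$ whose quotient is isomorphic to $H$.

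For the forward direction, I would first verify that the projection $\pi \colon A \to H$, $(g,h) \mapsto h$, is a brace homomorphism; this is immediate from the defining formulas for $+$ and $\circ$ on the semidirect product. Hence $H$ is a quotient of $A$, and $H$ inherits a finite multipermutation level from $A$ because $A^{(k)} = 0$ forces $(A/\ker\pi)^{(k)} = 0$. Separately, the inclusion $g \mapsto (g, 0)$ embeds $N$ as a sub-brace of $A$, using that $\sigma(0) = \mathrm{id}_N$ (since $0$ is the identity of $H^{\circ}$), which gives $(g_1, 0) \circ (g_2, 0) = (g_1 \circ g_2, 0)$. Sub-braces inherit a finite multipermutation level from their ambient brace, so $N$ does too.

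For the reverse direction, suppose $N^{(n)} = 0$ and $H^{(m)} = 0$. Applying $\pi$ to the recursion $A^{(k+1)} = A^{(k)} \cdot A$ and using that $\pi$ is a brace homomorphism gives $\pi(A^{(k)}) = H^{(k)}$ inductively, so $A^{(m)} \subseteq \ker \pi = N \times \{0\}$. The crucial identity is
\[
(g, 0) \cdot (g', h') = (g \cdot g', 0)
\]
for all $g, g' \in N$ and $h' \in H$. This is obtained from $(g, 0) \circ (g', h') = (g \circ g', h')$ (again using $\sigma(0) = \mathrm{id}_N$) together with $a \cdot b = a \circ b - a - b$. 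Writing $A^{(m)} = M_m \times \{0\}$ for some additive subgroup $M_m \subseteq N$, left distributivity and the identity above give $A^{(m+1)} \subseteq (M_m \cdot N) \times \{0\} \subseteq N^{(2)} \times \{0\}$, and inductively $A^{(m+k)} \subseteq N^{(k+1)} \times \{0\}$; setting $k = n - 1$ yields $A^{(m+n-1)} \subseteq N^{(n)} \times \{0\} = 0$.

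The main technical point is the bookkeeping in the iterated products: once $A^{(m)}$ lies inside $N \times \{0\}$, further right-multiplication by arbitrary elements of $A$ must not reactivate a nonzero $H$-coordinate, and the first coordinate must collapse into the tower $N^{(k)}$ rather than just into some larger subset of $N$. Both features are secured by the displayed identity, which itself relies on $\sigma$ sending the $\circ$-identity $0 \in H$ to $\mathrm{id}_N$; this is what makes the semidirect twist trivial on elements of $N \times \{0\}$. Beyond this observation and left distributivity, no further ingredient is needed.
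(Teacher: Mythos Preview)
Your proof is correct and follows essentially the same route as the paper's: both directions rest on the projection $(g,h)\mapsto h$ being a brace homomorphism together with the key identity $(g,0)\cdot(g',h')=(g\cdot g',0)$, obtained from $\sigma(0)=\mathrm{id}_N$. Your presentation is phrased in the language of sub-braces, ideals, and quotients while the paper carries out the same steps by explicit coordinate computation, but the underlying argument is identical.
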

\begin{proof} Let $A$ be a left brace. 
 It can be observed that $A$ is a brace of a finite multipermutation level if and only if 
 $A^{(n)}=0$ for some $n$ (it follows from results from \cite{rump} in the section about the  socle of a brace, see also \cite{cjo1} for translation to the left braces).
 Let $g_{i}\in N$, $h_{i}\in H$ and $(g_{i},h_{i})\in N\rtimes H$ then 
\[(\cdots (((g_{1}, h_{1})\cdot (g_{2}, h_{2}))\cdot (g_{3}, h_{3}))\cdot \ldots )\cdot  (g_{n}, h_{n})=(d, (\cdots ((h_{1}\cdot h_{2})\cdot h_{3})\cdots) )\]
 for some $d\in N$, where $(g_{i}, h_{i})\cdot (g_{j}, h_{j})=(g_{i}, h_{i})\circ (g_{j}, h_{j})-(g_{i}, h_{i})-(g_{j}, h_{j})$. 

Therefore, if 
brace $A=N\rtimes H$ has a finite multipermutation level then $A^{(n)}=0$ for some $n$, and hence 
 $H^{(n)}=0$. Notice that if $h_{1}=1=0$ then 
\[(g_{1}, 1)\circ (g_{2}, h_{2})=(g_{1}\circ \sigma (1)(g_{2}), h_{2})=(g_{1}\circ g_{2}, h_{2})\]
( since $\sigma $ is a  homomorphism of groups so $\sigma (1)=1$). It follows that  
\[(g_{1}, 1)\cdot (g_{2}, h_{2})=(g_{1}, 1)\circ (g_{2}, h_{2})-(g_{1}, 1)-(g_{2}, h_{2})=(g_{1}\cdot g_{2}, 1).\]

Recall that $A=N\rtimes H$. Therefore, if 
 $A^{(n)}=0$ then 
\[(\cdots (((g_{1}, 1)\cdot (g_{2}, h_{2}))\cdot (g_{3}, h_{3}))\cdots )\cdot  (g_{n}, h_{n})=((\cdots ((g_{1}\cdot g_{2})\cdot g_{3})\cdots )\cdot g_{n}, 1)\]
  therefore $N^{(n)}=0$ and so
 $N$ has a finite multipermutation level. We have shown that if $N\rtimes H$ has a finite multipermutation level then $N$ and $H$ are braces of a finite multipermutation level.
 
Assume now that $H$ has a finite multipermutaton level, so $H^{(m)}=0$ for some $m$.
 Then \[(\cdots (((g_{1}, h_{1})\cdot (g_{2}, h_{2}))\cdot (g_{3}, h_{3}))\cdots )\cdot  (g_{m}, h_{m})=(d,1)\] for some $d\in N$.
Assume that $N$ has a finite multipermutation level, so
$N^{(m')}=0$ for some $m'$.
 Then  
\[(\cdots (((d, {1})\cdot (g_{m+1}, h_{m+1}))\cdot (g_{+2}, h_{m+2}))\cdots )\cdot  (g_{m+m'}, h_{m+m'})=(1,1)=(0,0).\]
 Therefore,  
\[(\cdots (((g_{1}, h_{1})\cdot (g_{2}, h_{2}))\cdot (g_{3}, h_{3}))\cdots  )\cdot  (g_{m+m'}, h_{m+m'})=(1,1)=(0,0),\]
 and hence $A^{(m+m')}=0$ and so $N\rtimes H$ is a brace of finite multipermutation level.
\end{proof}
 We follow definition from \cite{cjo} for the wreath product of left braces.

{\bf  Definition $3$.} Let $G, H$ be two left braces. Then the wreath product $G\wr H$ of $G$ by $H$ is the semidirect product $W\rtimes H$, where \[W=\{f: H\rightarrow G:| \{h\in H:f(h)\neq 1\}|<\infty \}\] and the action of $H$ of $W$ is given by the homomorphism $\sigma : H\rightarrow Aut (W)$ defined by 
 $\sigma (h)(f)(x)=f(hx)$, for all $x\in H$ and $f\in W$.

Recall that the multiplication on $W$ is defined as $(f_{1}\circ f_{2})(x)=f_{1}(x)\circ f_{2}(x)$ and the sum is defined 
$(f_{1}+f_{2})(x)=f_{1}(x)+f_{2}(x)$  for $f_{1}, f_{2}\in W$ and $x\in H$. 

\begin{lemma}  Let $G, H$ be left braces. Then the wreath product $G\wr H$ of $G$ by $H$ is
 a left brace of a finite multipermutation level if and only if $G$ and $H$ are braces of a finite multipermutation level.
\end{lemma}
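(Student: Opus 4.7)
The plan is to reduce the claim to Lemma~\ref{semi} via the identity $G \wr H = W \rtimes H$ given in Definition~3. Since $H$ already has a finite multipermutation level by hypothesis, Lemma~\ref{semi} will deliver the conclusion as soon as we know that $W$ also has a finite multipermutation level.

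First, I would verify that every brace operation on $W$ is inherited pointwise from $G$. Addition is defined pointwise by construction, and the paper's recipe for the wreath product specifies $(f_1 \circ f_2)(x) = f_1(x) \circ f_2(x)$ for $x \in H$. Consequently the dot multiplication also acts coordinate-wise:
\[
(f_1 \cdot f_2)(x) = (f_1 \circ f_2 - f_1 - f_2)(x) = f_1(x) \cdot f_2(x),
\]
and the brace axioms for $W$ are just the brace axioms for $G$ applied coordinate-by-coordinate. The zero function is the additive and multiplicative identity of $W$.

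Next, I would establish by a simple induction on $n$ that for all $f_1,\ldots,f_n \in W$ and all $x \in H$,
\[
\bigl(\,(\cdots((f_1 \cdot f_2) \cdot f_3)\cdots )\cdot f_n\,\bigr)(x) \;=\; (\cdots((f_1(x) \cdot f_2(x)) \cdot f_3(x))\cdots )\cdot f_n(x).
\]
Assume $G$ has multipermutation level $m$, so that $G^{(m)}=0$. The right-hand side then belongs to $G^{(m)}=0$ for every $x$, which forces the left-hand side to be the zero function. Hence $W^{(m)}=0$, so $W$ is a brace of finite multipermutation level.

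Finally, I would apply Lemma~\ref{semi} with $N=W$: both $W$ and $H$ have finite multipermutation level, so $W \rtimes H = G \wr H$ does as well. I do not foresee any real obstacle; the only point requiring any care is keeping track of the pointwise structure and noticing that, although $W$ is typically infinite (a restricted direct product of copies of $G$ indexed by $H$), the multipermutation level of $W$ is bounded by that of $G$ because $G^{(m)}=0$ kills the $m$-fold dot product coordinate-by-coordinate, independently of the cardinality of $H$.
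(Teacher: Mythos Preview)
Your proposal is correct and follows essentially the same route as the paper: reduce to Lemma~\ref{semi}, observe that the dot product on $W$ is computed pointwise from $G$, and conclude $W^{(m)}=0$ from $G^{(m)}=0$. In fact you state the key point more accurately than the paper does: the bound on the multipermutation level of $W$ comes from $G$ (since $f_i(x)\in G$), whereas the paper's text writes $H$ there by an apparent slip.
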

\begin{proof}  By Lemma \ref{semi},  it suffices to show that the left brace $W$ defined as in Definition $3$ is of a finite multipermutation level  if and only if  $G$ is of a finite multipermutation level.
 Notice that if $f_{1}, f_{2}\in W$ and $x\in H,$ then $(f_{1}\cdot f_{2})(x)=(f_{1}\circ f_{2}-f_{1}-f_{2})(x)=(f_{1}\circ f_{2})(x)-f_{1}(x)-f_{2}(x)=f(x)\cdot f_{2}(x)$. 
 Suppose that $G$ is of a finite multipermutation level. That means that $G^{(m)}=0$ for some $m$.
 For $f_{1}, f_{2}, \ldots , f_{m}\in W$ and $x\in H$ we get 
 \[0=(\cdots (((f_{1}(x)\cdot f_{2}(x))\cdot f_{3}(x))\cdots) f_{m}(x)=((\cdots ((f_{1}\cdot f_{2})\cdot f_{3})\cdots )\cdot f_{m})(x).\]
  Hence $(\cdots ((f_{1}\cdot f_{2})\cdot f_{3})\cdots )f_{m}=0$, and thus $W^{(m)}=0.$ Therefore $W$ is of a finite multipermunation level.

 Conversely, assume that $W$ is of a finite multipermutation level. That means that $W^{(m)}=0$ for some $m$. Let $g_{1}, \ldots , g_{m}\in G$. Let $f_{g_{i}}$ be the element of $W$ defined by $f_{g_{i}}(x)=g_{i}$ if $x=1$ and $f_{g_{i}}(x)=1$ otherwise. 
 Then 
\[(\cdots ((g_{1}\cdot g_{2})\cdot g_{3})\cdots )\cdot g_{m}-((\cdots ((f_{g_{1}}\cdot f_{g_{2}})\cdot f_{g_{3}})\cdots )\cdot f_{g_{m}})(1)=0.\] 
Hence $G^{(m)}=0$ and the result follows.
\end{proof}
{\small\bf Acknowledgements}
{\small  I am very grateful to David Bachiller, Ferran Ced{\' o}, Tatiana Gateva-Ivanova, Jan Okni{\' n}ski and Leandro Vendramin for many helpful suggestions which have improved the previous version of this paper. 
 I am especially grateful to David Bachiller for his suggestion that the assumption that the cardinality of $A$ is an odd number was not necessary in the previous version of Corollary \ref{momik}, and to Ferran Ced{\' o} for  providing a shorter and better  proof of Theorem \ref{sardegna}, which in addition allows the removal of the assumption that the solution associated to $A$ is a multipermutation solution. I am also grateful to Leandro Vendarmin for providing a list of braces which small cardinalities. 
I would also like to thank Michael West for  reviewing  the English-language aspects of this paper. I am very grateful to the unknown referee for their many helpful comments and suggestions which have improved the paper.
  This research was supported with ERC grant 320974, and I would like to thank them for their support. }

\end{document}